\newcommand{\R}{\mathbb{R}}
\newcommand{\N}{\mathbb{N}}
\newcommand{\hd}{\dim_{\textup{H}}}
\newcommand{\bd}{\dim_{\textup{B}}}
\newcommand{\ubd}{\overline{\dim}_{\textup{B}}}
\newcommand{\lbd}{\underline{\dim}_{\textup{B}}}
\newcommand{\uid}{\overline{\dim}_{\,\theta}}
\newcommand{\lid}{\underline{\dim}_{\,\theta}}
\newcommand{\ad}{\dim_{\textup{A}}}
\newcommand{\as}{\dim^\theta_{\mathrm{A}} }
\DeclarePairedDelimiter\floor{\lfloor}{\rfloor}
\newtheorem{theorem}{Theorem}[section]
\newtheorem{lemma}[theorem]{Lemma}
\newtheorem{corollary}[theorem]{Corollary}
\theoremstyle{definition}
\theoremstyle{remark}
\numberwithin{equation}{section}
\title[The Fractal structure of elliptical polynomial spirals]{The Fractal structure of elliptical polynomial spirals}
\author{S. A. Burrell}
\address{S. A. Burrell, School of Mathematics and Statistics, University of St Andrews, St Andrews, KY16 9SS, United Kingdom.}
\email{sb235@st-andrews.ac.uk}
\author{K. J. Falconer}
\address{K. J. Falconer, School of Mathematics and Statistics, University of St Andrews, St Andrews, KY16 9SS, United Kingdom.}
\email{kjf@st-andrews.ac.uk}
\author{J. M. Fraser}
\address{J. M. Fraser, School of Mathematics and Statistics, University of St Andrews, St Andrews, KY16 9SS, United Kingdom.}
\email{jmf32@st-andrews.ac.uk}
\date{August 19, 2020.}
\begin{document}
\begin{abstract}
We investigate fractal aspects of elliptical polynomial spirals; that is, planar spirals with differing polynomial rates of decay in the two axis directions. We give a full dimensional analysis of these spirals, computing explicitly their intermediate, box-counting and Assouad-type dimensions. An exciting feature is that these spirals exhibit two phase transitions within the Assouad spectrum, the first natural class of fractals known to have this property. We go on to use this dimensional information to obtain bounds for the H\"older regularity of maps that can deform one spiral into another, generalising the `winding problem’ of when spirals are bi-Lipschitz equivalent to a line segment. A novel feature is the use of fractional Brownian motion and dimension profiles to bound the H\"older exponents.

\vspace{0.1in}
\emph{Mathematics Subject Classification} 2020: primary: 28A80

\vspace{0.1in}
\emph{Key words and phrases}: elliptical polynomial spiral, generalised hyperbolic spiral, box-counting dimension, Assouad dimension, Assouad spectrum, intermediate dimensions, H\"older exponents, fractional Brownian motion.
\end{abstract}

\maketitle

\section{Introduction}
An infinitely wound spiral is a subset of the complex plane
\begin{equation}\label{genspiral}
S(\phi) = \{\phi(t)\exp(it) : 1 < t < \infty \},
\end{equation}
where $\phi:[1, \infty) \rightarrow (0, \infty)$, known as a \emph{winding function},  is continuous, strictly decreasing and tends to zero as $t \rightarrow \infty$.
Such forms arise throughout science and the natural world, from $\alpha$-models of fluid turbulence and vortex formation to the structure of galaxies \cite{foi,mand,moff,vass,vasshunt}. The self-similarity present within these spirals makes them natural candidates for fractal analysis, and one may wish to examine the fine local structure present at the origin \cite{dup,fraser:spirals}. This may be quantified via a suitable notion of fractal dimension such as box-counting (Minkowski) dimension \cite{falconer,zub}.

The isotropic classical definition (\ref{genspiral}) may be too restrictive for the modelling of general natural or abstract phenomena. Most naturally occurring spirals are anisotropic, developing in systems with inherent asymmetry, such as elliptical whirlpools forming in a flowing body of water. Another simple example arises in Newtonian mechanics: suppose a weight attached to an elastic band is rotated about an axis parallel to the ground. At high velocities the centripetal force dominates gravity and the orbit is circular. However, if the system is allowed to decelerate, the weight will follow a spiral trajectory that will become increasingly elongated in the vertical direction as the relative contribution of gravitational force grows.

To account for these scenarios, flexibility may be introduced by controlling the rate of contraction in each axis and introducing an additional functional parameter. Thus, for two winding functions $\phi, \psi : [1, \infty) \rightarrow (0, \infty)$, we define the associated \emph{elliptical} spiral to be
\begin{equation}\label{defaffinespiral}
S(\phi, \psi) =  \{\phi(t)\cos t + i \psi(t)\sin t: 1 < t < \infty\}.
\end{equation}
Our results concern the family of elliptical \emph{polynomial} spirals $S_{p, q} = S(t^{-p}, t^{-q})$, where $0 < p \leq q$, although our arguments apply more generally. If $p=q$, then we write $S_{p, p} = S_p$ and (\ref{defaffinespiral}) recovers the \emph{generalised hyperbolic} spirals. Spirals such as these with polynomial winding functions typically arise in systems with an underlying dynamical process. On the other hand, spirals emerging from static settings are generally logarithmic with winding functions of the form $\exp(-ct)$ for $c > 0$ \cite{fraser:spirals}.

\begin{figure}[H]
\begin{center}
\includegraphics[width = \linewidth]{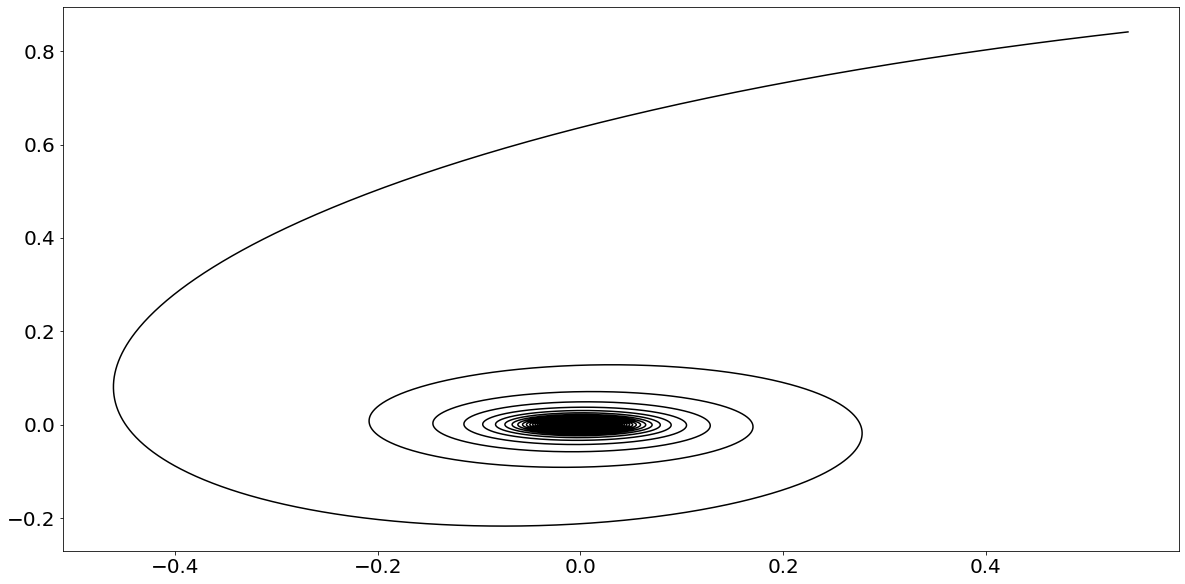}
\caption{An elliptical polynomial spiral $S_{p, q}$ with $p =0.7$ and $q=0.75$.}
\end{center}
\end{figure}

This paper serves two purposes. First, we offer a dimensional analysis of the family of elliptical polynomial spirals. This involves calculating the intermediate, box-counting (Minkowski) and Assouad-type dimensions. For a thorough introduction to these dimensions we direct the reader to \cite{falconer,jon:book}. We begin, in Theorem \ref{affineid}, by considering the intermediate dimensions of Falconer, Fraser and Kempton \cite{fafrke:2018}, which we denote $\dim_\theta$ for $\theta \in [0, 1]$ and formally define in Section \ref{intdef}. Roughly speaking, these dimensions interpolate between the Hausdorff and upper box dimensions in the sense that
$$
\hd E \leq \dim_\theta E \leq \ubd E.
$$
Intermediate dimensions have already seen surprising applications and properties, despite their recent introduction. For example, they have been used to establish relationships between the Hausdorff dimension of a set and the typical box dimension of fractional Brownian images \cite{bu:2020} or orthogonal projections \cite{bufafr:2019}. Other notable works include \cite{istvan}.

The second major notion of dimension interpolation, the Assouad spectrum of Fraser and Yu \cite{Spectraa}, lies between the upper box and Assouad dimensions and is defined in Section \ref{asdef}. One important feature of the spectrum of $S_{p, q}$ is the presence of two points of non-differentiability, or phase transitions, see Theorem \ref{affinespec}. The elliptical polynomial spirals are the first natural example to exhibit this behaviour, found before only as the product of delicate constructions.

Together, our results show the intermediate dimensions and the Assouad spectrum provide a continuous interpolation between the two extremes of the dimensional repertoire, as illustrated in Figure \ref{completeinter}.

\begin{figure}[H]
\begin{center}
\includegraphics[width = \linewidth]{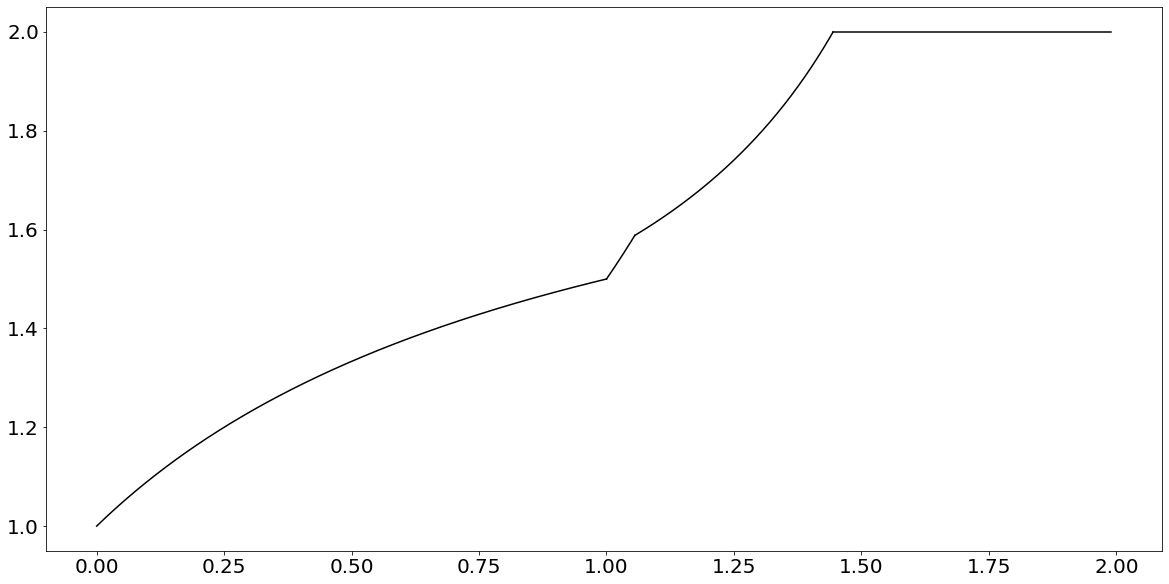}
\caption{A plot of $\dim_\theta S_{p, q}$ ($y$-axis) against $\theta$ ($x$-axis)  for $\theta \in [0, 1]$ and $\ad^{\theta-1} S_{p,q}$ against $\theta$ for $\theta \in [1,2]$. In this example, $p = 0.1$ and $q = 0.8$.}
\label{completeinter}
\end{center}
\end{figure}
The second focus is to apply the computed dimensions to determine permissible $\alpha$ such that there may exist an $\alpha$-H\"older function $f: S_{p,q} \rightarrow S_{r, s}$ that \emph{deforms} one elliptical polynomial spiral into another. Recall a function $f : X \rightarrow Y$ is $\alpha$-H\"older ($0< \alpha \leq 1$) if there exists $c > 0$ such that
$$
|f(x) - f(y)| \leq c |x-y|^\alpha \,\,\,\,\,\,\,\,\,\,(x,y \in X).
$$
Such maps may play a role within dynamical systems where spirals form and evolve over time. The H\"older exponent characterises the regularity of $f$ by quantifying the degree of distortion at local scales. A number of related questions on regularity have been explored over the past few decades for different categories of spirals that arise from winding functions of various canonical forms. Katznelson, Nag and Sullivan show that the logarithmic spiral satisfies the bi-Lipschitz \emph{winding problem} \cite{unwindspirals}. That is, it may be constructed as the image of a bi-Lipschitz homeomorphism on the unit interval. However, if $\phi$ is decays sub-exponentially, i.e.
$$
\frac{\log \phi(t)}{t} \rightarrow 0\,\,\,\,\,\,\,\,\,\,( t \rightarrow \infty),
$$
then no such bi-Lipschitz homeomorphism exists \cite{fish}. This led Fraser \cite{fraser:spirals} to investigate H\"older solutions to the winding problem for generalised hyperbolic spirals.

Our methodology is based on the dimension profiles from \cite{bu:2020,bufafr:2019}. Of course, if there is an $\alpha$-H\"older map between $S_{p, q}$ and $S_{r, s}$ we immediately obtain
\begin{equation}\label{bdbound}
\alpha \leq \frac{\dim S_{p, q}}{\dim S_{r, s}},
\end{equation}
where $\dim$ denotes Hausdorff or box-counting dimension, since
$$
\dim f(E) \leq \frac{1}{
\alpha} \dim E
$$
for $E \subset \R^n$ and $\alpha$-H\"older $f : \R^n \rightarrow \R^n$. However, the upper $2\alpha$-dimension profiles, denoted $\uid^{2\alpha} S_{p.q}$ and bounded above by $\ubd S_{p,q}$, provide a strictly sharper bound on $\alpha$ by use of the formula
\begin{equation}\label{profbound}
\alpha \leq \frac{\uid^{2\alpha} S_{p, q}}{\dim_\theta S_{r, s}},
\end{equation}
derived from Falconer \cite[Theorem 2.6]{fal:2018} in the case $\theta = 1$ and \cite[Theorem 3.1]{bu:2020} for $\theta \in [0 ,1]$.

While this approach seems promising at first sight, the definition of the profiles is potential-theoretic and rather challenging to compute in the case of $S_{p, q}$. This difficulty is circumvented by instead using the relationship to their fractional Brownian images given by Theorem \cite[Theorem 3.4]{bu:2020}. In fact, the method employed here may be used more generally to estimate the H\"older regularity of a function between any two sets for which the box or intermediate dimensions of the fractional Brownian images may be estimated from above.

\section{Statement and Discussion of results}\label{results}
This section is divided into two parts. The first offers a complete analysis of the dimensions of $S_{p, q}$, while the second considers applications to the H\"older regularity of maps that deform one elliptical polynomial spiral into another. 

\subsection{Dimensions}
For $0 < p \leq q$, the Hausdorff and packing dimensions (see \cite{falconer}) satisfy
$$
\hd S_{p, q} = \dim_{\textup{P}} S_{p, q} = 1,
$$
due to the countable stability of these dimensions and the decomposition (\ref{decomp}). We present the remaining dimensions of $S_{p, q}$ in ascending order, beginning with the intermediate dimensions.

\begin{theorem}\label{affineid}
Let $\theta \in [0, 1]$ and $0 < p \leq q$. If $p < 1$, then
$$
\dim_\theta S_{p, q} = \frac{p + q + 2\theta(1-p)}{p + q + \theta(1-p)}.
$$
Otherwise, if $p \geq 1$, then
$$
\dim_\theta S_{p, q} = 1.
$$
\end{theorem}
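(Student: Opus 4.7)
The argument bifurcates at $p=1$. For $p \geq 1$ we have $\hd S_{p,q} = \ubd S_{p,q} = 1$: the lower bound follows from $S_{p,q}$ containing a smooth injective curve, and a standard arc-length-based cover (using that $t^{-p}$ is at worst log-integrable on $(1,\infty)$) gives the matching upper box bound. The monotonicity sandwich $\hd \leq \dim_\theta \leq \ubd$ then forces $\dim_\theta S_{p,q} = 1$ for all $\theta \in [0,1]$.

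Assume now $0 < p < 1$, write $s(\theta) := \frac{p+q+2\theta(1-p)}{p+q+\theta(1-p)}$, and set
$$T_\delta := \delta^{-1/[p+q+\theta(1-p)]}.$$
Decompose $S_{p,q} = A_\delta \cup B_\delta$ with $A_\delta$ parameterised by $t \in (1, T_\delta]$ and $B_\delta$ by $t > T_\delta$. For the upper bound, use the estimate $|\gamma'(t)| \lesssim t^{-p}$ to cover $A_\delta$ by $O(T_\delta^{1-p}/\delta^{1/\theta})$ sets of diameter $\delta^{1/\theta}$, and observe that $B_\delta$ lies inside the rectangle $[-T_\delta^{-p}, T_\delta^{-p}] \times [-T_\delta^{-q}, T_\delta^{-q}]$, which is covered by $O(T_\delta^{-p-q}/\delta^2)$ sets of diameter $\delta$ once one checks that our choice of $T_\delta$ forces $\delta < T_\delta^{-q} < T_\delta^{-p}$. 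Substituting $T_\delta$ into both counts a direct computation yields $\sum_i |U_i|^{s(\theta)} = O(1)$, whence $\sum_i |U_i|^{s'} \to 0$ for every $s' > s(\theta)$ and $\dim_\theta S_{p,q} \leq s(\theta)$.

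For the lower bound, fix $s' < s(\theta)$ and any admissible cover $\{U_i\}$ with $\delta^{1/\theta} \leq |U_i| \leq \delta$. Grouping the cover dyadically by diameter reduces matters to estimating, at each scale $r \in [\delta^{1/\theta}, \delta]$, two quantities: the arc length of $A_\delta$ absorbed by a set of diameter $r$ (bounded by $O(r)$), and the relative area of the inner rectangle absorbed by the same set (bounded by $O(r^2 T_\delta^{p+q})$). Equivalently, one may construct a Frostman-type measure $\mu$ on $S_{p,q}$ satisfying $\mu(B(x,r)) \lesssim r^{s(\theta)}$ for $r$ in the admissible range. Either approach yields the matching inequality $\sum_i |U_i|^{s'} \gtrsim 1$ and therefore $\dim_\theta S_{p,q} \geq s(\theta)$.

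The principal obstacle is the lower bound, and more specifically the density/Frostman step for the inner piece $B_\delta$: the anisotropic scaling means the spiral does not fill its bounding rectangle isotropically at scale $\delta$, so a naive density estimate must be refined. An equidistribution argument for $t$ modulo $2\pi$, combined with the separation estimates $\sim t^{-p-1}$ in the $x$-direction and $\sim t^{-q-1}$ in the $y$-direction between consecutive windings, should yield the required bound when phrased in a suitably weighted norm. A secondary but necessary verification is that $T_\delta$ is genuinely the optimal transition scale: the competing covering regimes $\delta \geq T^{-p}$ and $T^{-q} \leq \delta < T^{-p}$ must be ruled out as non-optimal by a short case analysis, since the exponent produced by the Case $\delta \geq T^{-q}$ optimisation degenerates to $s=1$ and would otherwise give a misleadingly weak estimate.
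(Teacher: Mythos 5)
Your upper bound is essentially the paper's argument (the identity-map case of Lemma \ref{holderlem}): the same decomposition into an outer arc covered at the fine scale and an inner bounding rectangle covered at the coarse scale, with the same transition scale (your $T_\delta=\delta^{-1/(p+q+\theta(1-p))}$ matches the paper's $M$ after the change of convention $\delta\mapsto\delta^{1/\theta}$), and the computation that both terms contribute $O(1)$ at $s=s(\theta)$ is correct. The $p\geq 1$ case via $1=\hd\leq\dim_\theta\leq\ubd=1$ is also fine.

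The genuine gap is the lower bound, and it sits exactly where you flag it. The estimate you propose to use — ``the arc length of $A_\delta$ absorbed by a set of diameter $r$ is $O(r)$'' — is false in the relevant regime. Consecutive turns of index $k\leq T_\delta$ are separated (in the minor-axis direction) by only about $k^{-(q+1)}\gtrsim T_\delta^{-(q+1)}=\delta^{(1+q)/(p+q+\theta(1-p))}$, and for $p<1$, $\theta<1$ this is strictly smaller than the top admissible scale $\delta$; hence a set of diameter $r$ near the top scale can meet roughly $rT_\delta^{1+q}\gg 1$ turns and absorb arc length of order $r^2T_\delta^{1+q}\gg r$. Your sketch acknowledges this (``a naive density estimate must be refined'') but defers the fix to an unexecuted ``equidistribution argument \ldots in a suitably weighted norm,'' so the matching inequality $\sum_i|U_i|^{s'}\gtrsim 1$ is never actually established; this is the crux of the theorem, not a routine verification. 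The paper closes it by the mass distribution principle for intermediate dimensions with a $\delta$-dependent measure supported only on the outer turns, $\mu_\delta=\delta^{s-1}\sum_{k\leq M}\mathcal{H}^1|_{S^{+,k}_{p,q}}$, and the two-factor Frostman estimate $\mu_\delta(U)\lesssim\bigl(|U|\delta^{s-1}\bigr)\bigl(|U|M^{1+q}\bigr)\leq|U|^{s}$, which uses precisely the $k^{-(q+1)}$ separation (note it is the $q$-separation, not the $p$-separation, that is binding) together with the constraint that $|U|$ is at most the coarse scale. Two further remarks: no mass needs to be placed on the inner rectangle at all (the outer turns already carry total mass $\approx 1$), and your ``relative area of the inner rectangle absorbed'' is not a measure supported on the spiral, so as stated it cannot feed into a Frostman/mass-distribution argument; and the ``optimality of $T_\delta$'' verification you call necessary is not needed — for the upper bound any choice of transition gives a valid cover, and for the lower bound one only needs the single measure above.
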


\begin{figure}[h!]
\begin{center}
\includegraphics[width = \linewidth]{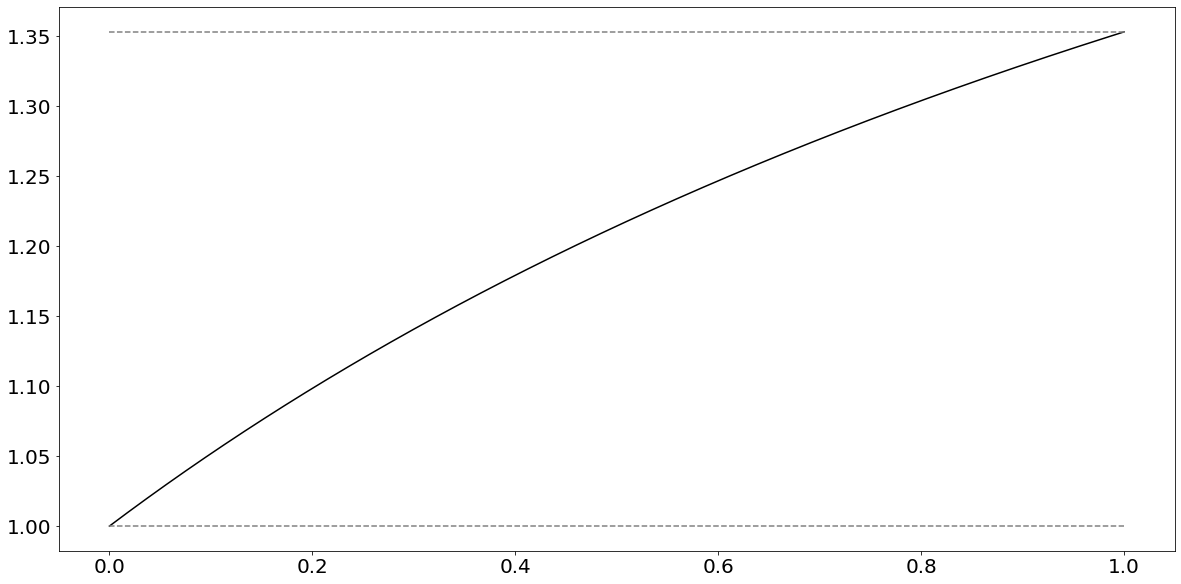}
\caption{A plot of $\dim_\theta S_{p, q}$ ($y$-axis) against $\theta$ ($x$-axis) for $p = 0.4$ and $q=0.7$, along with horizontal lines that indicate $\hd S_{p, q} = 1$ and $\bd S_{p, q} = (2+q-p)/(1+q)$.}
\end{center}
\end{figure}

In proving Theorem \ref{affineid}, it is convenient to prove the upper bound in the wider context of images of elliptical spirals under H\"older transformations. As we shall see, this becomes especially relevant in Section \ref{deformsec} when considering fractional Brownian images and dimension profiles. 

\begin{lemma}\label{holderlem}
Let $0 < p \leq q$, $\theta \in [0, 1]$ and $f : S_{p, q} \rightarrow \R^2$ be $\alpha$-H\"older $(0 < \alpha \leq 1)$. If $p < 1$, then
$$
\uid f(S_{p, q}) \leq \begin{cases}
2 & 0 < \alpha \leq 1/2\\
\frac{p + q + 2\theta(1-p)}{ \alpha(p+q) + \theta(1-p)}
& 1/2 < \alpha \leq 1
\end{cases}.
$$
Otherwise, if $p \geq 1$, then
$$
\uid f(S_{p, q}) \leq \begin{cases}
2 & 0 < \alpha \leq 1/2\\
\frac{1}{\alpha}
& 1/2 < \alpha \leq 1
\end{cases}.
$$
\end{lemma}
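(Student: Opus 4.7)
For $\alpha\le 1/2$ the bound $\uid f(S_{p,q})\le 2$ is immediate, since $f(S_{p,q})\subset\R^{2}$ and the intermediate dimensions are bounded above by the ambient dimension. The case $p\ge 1$ also reduces quickly: in that regime the spiral has finite arc length, so it is a Lipschitz image of a bounded interval, and the standard upper box-dimension bound for H\"older images of intervals gives $\uid f(S_{p,q})\le \ubd f(S_{p,q})\le 1/\alpha$.

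The main case is $p<1$ with $1/2<\alpha\le 1$. The plan is to construct, for every small $\delta>0$, an explicit cover of $f(S_{p,q})$ by sets of diameter in $[\delta^{1/\theta},\delta]$ and to control the associated $s$-sum. Fix a transition time $T=T(\delta)$ to be optimised, and split the parametrisation at $T$, writing
\[
S_{p,q}=\gamma([1,T])\cup\gamma([T,\infty)),\qquad \gamma(t)=t^{-p}\cos t + i\, t^{-q}\sin t.
\]
Cover the outer arc $\gamma([1,T])$ by balls of diameter $\rho_{1}\sim\delta^{1/(\alpha\theta)}$ strung along the curve; using $|\gamma'(t)|\lesssim t^{-p}$ and $p<1$, the arc has length $\lesssim T^{1-p}$, so this needs $N_{1}\lesssim T^{1-p}/\rho_{1}$ balls, and the $\alpha$-H\"older image of each has diameter at most $\delta^{1/\theta}$ (enlarge to exactly that size if needed). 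Cover the tail $\gamma([T,\infty))$, which lies in the elliptical rectangle $[-T^{-p},T^{-p}]\times[-T^{-q},T^{-q}]$, by a grid of balls of diameter $\rho_{2}\sim\delta^{1/\alpha}$; this takes $N_{2}\lesssim T^{-(p+q)}/\rho_{2}^{2}$ balls whenever $\rho_{2}\le T^{-q}$, and their $f$-images have diameter at most $\delta$.

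The resulting $s$-sum is bounded by
\[
T^{1-p}\,\delta^{(s-1/\alpha)/\theta}\;+\;T^{-(p+q)}\,\delta^{s-2/\alpha},
\]
and $T$ is chosen as a power of $\delta$ so as to balance the two terms. Requiring the total to remain bounded as $\delta\to 0$ then yields the critical exponent $s=(p+q+2\theta(1-p))/(\alpha(p+q)+\theta(1-p))$, which delivers the claimed upper bound on $\uid f(S_{p,q})$.

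The chief difficulty is in extracting the sharp denominator $\alpha(p+q)+\theta(1-p)$ rather than the naive H\"older bound $\uid S_{p,q}/\alpha$ (whose denominator carries an extra factor of $\alpha$ on the $\theta(1-p)$ term). This improvement is driven by the asymmetric decay rates $T^{-p}$ and $T^{-q}$ of the two sides of the tail rectangle, which must be fully exploited in the combinatorial count of tail balls. A secondary technicality is that for certain ranges of $\theta$ and $\alpha$ the optimal $T$ falls into intermediate regimes (with $T^{-q}\le \rho_{2}\le T^{-p}$, where the tail is covered by a one-dimensional row of balls rather than a two-dimensional grid, or with $\rho_{2}\ge T^{-p}$, where a single ball suffices); these subcases are handled by the same optimisation scheme and give the same exponent.
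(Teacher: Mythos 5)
Your splitting of the spiral at a transition time $T$, and your treatment of the outer arc (cover it in the domain at scale $\delta^{1/(\alpha\theta)}$ and push forward to fine-scale image sets), is exactly the paper's scheme; the gap is in the tail, and your final claim does not follow from your own estimate. With $T=\delta^{-\beta}$, making the displayed sum $T^{1-p}\delta^{(s-1/\alpha)/\theta}+T^{-(p+q)}\delta^{s-2/\alpha}$ stay bounded requires both exponents $-\beta(1-p)+(s-1/\alpha)/\theta$ and $\beta(p+q)+s-2/\alpha$ to be nonnegative, and such a $\beta$ exists precisely when $(s-1/\alpha)(p+q)\geq\theta(1-p)(2/\alpha-s)$, i.e.\ when
$$
s\ \geq\ \frac{p+q+2\theta(1-p)}{\alpha\bigl((p+q)+\theta(1-p)\bigr)}\ =\ \frac{1}{\alpha}\dim_\theta S_{p,q}.
$$
So your construction only yields the naive H\"older bound, whose denominator carries $\alpha\theta(1-p)$ rather than $\theta(1-p)$; this is strictly weaker than the stated lemma whenever $\theta>0$, $p<1$ and $\alpha<1$, and beating the naive bound is the entire point of the lemma (it is what makes Lemma \ref{profilelemma} and Theorem \ref{main} improve on Theorem \ref{bdregthm}). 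Moreover, the improvement cannot come from "fully exploiting the asymmetric decay rates in the combinatorial count of tail balls": your count $N_2\lesssim T^{-(p+q)}/\rho_2^{2}$ already uses the full anisotropy of the domain rectangle, so no refinement of that count rescues the scheme.

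The missing idea is that the coarse-scale sets in an admissible cover of $f(S_{p,q})$ need not be images of domain balls at all. In the paper the tail $\bigcup_{k>M}S_{p,q}^k$ is never gridded in the domain: the H\"older condition is used only to control the size of its \emph{image}, namely that $f$ maps the rectangle $[-M^{-p},M^{-p}]\times[-M^{-q},M^{-q}]$ into a rectangle with side lengths $\approx M^{-p\alpha}$ and $M^{-q\alpha}$, and that small image is then covered directly by balls of the coarse admissible diameter ($\delta^\theta$ in the paper's normalisation), giving $\approx M^{-\alpha(p+q)}\delta^{-2\theta}$ coarse sets. In your normalisation this replaces your second term $T^{-(p+q)}\delta^{s-2/\alpha}$ by $T^{-\alpha(p+q)}\delta^{s-2}$, and balancing this against $T^{1-p}\delta^{(s-1/\alpha)/\theta}$ does produce the critical exponent $s=(p+q+2\theta(1-p))/(\alpha(p+q)+\theta(1-p))$. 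Note that the anisotropic containment of the image rectangle (not merely a diameter bound $\lesssim M^{-p\alpha}$) is essential here: using only the isotropic bound gives the worse exponent $(2p+2\theta(1-p))/(2\alpha p+\theta(1-p))$ when $p<q$, so this is a geometric input you would have to state and justify. Finally, a minor point: your reduction of the case $p\geq1$ to finite arc length fails at $p=1$, where the length diverges logarithmically; the paper handles $p=1$ simply by observing that the general formula specialises to $1/\alpha$ there.
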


In Section \ref{prooflemsec}, we prove Lemma \ref{holderlem} using a direct covering argument. Theorem \ref{affineid} may then be proven by applying Lemma \ref{holderlem} to the identity map, along with a lower bound that we obtain using the mass distribution principle for intermediate dimensions \cite[Proposition 2.2]{fafrke:2018}. By setting $\theta = 1$, Theorem \ref{affineid} also offers the box dimensions of elliptical polynomial spirals.

\begin{corollary}\label{bdspsq}
Let $0 < p \leq q$. If $0 < p < 1$, then
$$
\bd S_{p, q} = \frac{2 + q - p}{1+q} = 1 + \frac{1-p}{1+q}.
$$
Otherwise, if $p \geq 1$, then
$$
\bd S_{p, q} =1.
$$
\end{corollary}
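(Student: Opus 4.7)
The plan is to derive this corollary directly from Theorem \ref{affineid} by specialising to $\theta = 1$. Built into the definition of the intermediate dimensions of \cite{fafrke:2018} is the fact that $\dim_\theta$, in its upper and lower versions, interpolates between Hausdorff dimension at $\theta = 0$ and box dimension at $\theta = 1$. Concretely, for any bounded $E \subset \R^n$ one has $\ubd E = \overline{\dim}_1 E$ and $\lbd E = \underline{\dim}_1 E$. Thus the fact that Theorem \ref{affineid} produces a single value for $\dim_\theta S_{p, q}$ at $\theta = 1$ already asserts that the upper and lower box dimensions coincide, so $\bd S_{p, q}$ exists and equals that common value.

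Evaluating the formula at $\theta = 1$ in the case $0 < p < 1$ gives
$$
\bd S_{p, q} \;=\; \frac{p + q + 2(1-p)}{p + q + (1-p)} \;=\; \frac{2 + q - p}{1 + q} \;=\; 1 + \frac{1-p}{1+q},
$$
while for $p \geq 1$ the formula immediately yields $\bd S_{p,q} = 1$. In the latter case, one can cross-check the lower bound independently using the trivial inequality $\bd S_{p,q} \geq \hd S_{p,q} = 1$ already noted at the start of Section \ref{results}.

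Because the argument is a one-line substitution, I would not anticipate any substantive obstacle; the genuine work lies in establishing Theorem \ref{affineid} itself (via Lemma \ref{holderlem} for the upper bound and a mass distribution argument for the matching lower bound). The corollary is flagged separately only because the box dimension is the quantity of most immediate practical interest and the simplified expression $1 + (1-p)/(1+q)$ is worth recording.
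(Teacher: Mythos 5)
Your proposal is correct and is exactly the paper's route: the corollary is obtained by evaluating Theorem \ref{affineid} at $\theta = 1$, using the identification of the $\theta = 1$ intermediate dimensions with the upper and lower box dimensions, and the algebraic simplification $\frac{p+q+2(1-p)}{p+q+(1-p)} = \frac{2+q-p}{1+q}$ is carried out correctly. Nothing further is needed.
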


In the special case $p = q$, Theorem \ref{affineid} may be applied to determine the intermediate dimensions of generalised hyperbolic spirals, which have also been obtained independently by Tan \cite{tan}.

\begin{corollary}\label{ghpdim}
Let $\theta \in [0, 1]$. If $0 < p < 1$, then
$$
\dim_\theta S_{p} = \frac{ 2p+2\theta(1 - p)}{2p + \theta(1-p)}.
$$
Otherwise, if $p \geq 1$, then
$$
\dim_\theta S_p = 1.
$$
\end{corollary}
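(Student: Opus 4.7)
The plan is to obtain this as an immediate specialisation of Theorem \ref{affineid} to the case $q = p$. Since the family $S_{p,q}$ is defined for all $0 < p \leq q$, and equality $p = q$ is admissible, the hypothesis of Theorem \ref{affineid} is satisfied by $S_{p,p} = S_p$. No new covering or lower bound argument is required; the work has been done at the level of elliptical polynomial spirals, and the hyperbolic case is extracted by substitution.

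Concretely, I would first record that by the very definition $S_p = S(t^{-p}, t^{-p}) = S_{p,p}$, so one may apply Theorem \ref{affineid} with the same $p$ and with $q$ replaced by $p$. In the regime $0 < p < 1$, the formula reads
\[
\dim_\theta S_p = \frac{p+p+2\theta(1-p)}{p+p+\theta(1-p)} = \frac{2p+2\theta(1-p)}{2p+\theta(1-p)},
\]
which is the claimed expression. In the regime $p \geq 1$, Theorem \ref{affineid} directly gives $\dim_\theta S_p = 1$ for all $\theta \in [0,1]$.

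Since the derivation is purely algebraic, there is no genuine obstacle here: the only thing to verify is that the conventions on the parameter range match (in particular, that the case $p = q$ is not excluded anywhere in the statement of Theorem \ref{affineid}), and that the arithmetic simplification above is correct. One could additionally remark that the resulting formula recovers, at $\theta = 1$, the box dimension value $\bd S_p = (2+p-p)/(1+p) = 2/(1+p)$ from Corollary \ref{bdspsq}, providing a simple consistency check; and that at $\theta = 0$ it returns $1$, matching the Hausdorff dimension noted at the start of Section \ref{results}.
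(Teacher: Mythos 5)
Your proposal is correct and is exactly the paper's route: Corollary \ref{ghpdim} is obtained by specialising Theorem \ref{affineid} to $q=p$, with the stated formula following by the same algebraic substitution you perform. The consistency checks at $\theta=0$ and $\theta=1$ are a pleasant extra but not required.
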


A question of interest within the literature on intermediate dimensions has been the classification of sets that are continuous at $\theta = 0$ \cite{bufafr:2019,fafrke:2018}. Theorem \ref{affineid} confirms that the elliptical polynomial spirals are within this class.

\begin{corollary}
Let $0 < p \leq q$. The function $\theta \rightarrow \dim_\theta S_{p, q}$ is continuous on $[0, 1]$.
\end{corollary}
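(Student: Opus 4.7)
The plan is to read off the result directly from the explicit formula provided by Theorem \ref{affineid}. Continuity on the open interval $(0,1]$ is automatic from the general theory of intermediate dimensions (they are always continuous on $(0,1]$, see \cite{fafrke:2018}), so the only substantive issue is continuity at the left endpoint $\theta = 0$.

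I would split into the two cases of Theorem \ref{affineid}. In the case $p \geq 1$ the function is identically $1$ on $[0,1]$, so continuity is immediate. In the case $p < 1$, Theorem \ref{affineid} gives
$$
\dim_\theta S_{p,q} = \frac{p+q+2\theta(1-p)}{p+q+\theta(1-p)}.
$$
The denominator satisfies $p+q+\theta(1-p) \geq p+q > 0$ for every $\theta \in [0,1]$, so the right-hand side is a rational function of $\theta$ with non-vanishing denominator on $[0,1]$, hence continuous there.

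Finally, I would verify that the formula matches the Hausdorff dimension at $\theta = 0$: substituting gives $\dim_0 S_{p,q} = (p+q)/(p+q) = 1 = \hd S_{p,q}$, as required. There is no substantive obstacle in this argument; the only point worth flagging is that continuity at $\theta = 0$ is not automatic for intermediate dimensions in general, and it is precisely the agreement of the $\theta \to 0^+$ limit with $\hd S_{p,q}$ that distinguishes the class of sets mentioned in the preceding discussion.
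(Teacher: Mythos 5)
Your proposal is correct and matches the paper's (implicit) argument: the corollary is stated as an immediate consequence of Theorem \ref{affineid}, whose explicit formula is a rational function of $\theta$ with positive denominator (or identically $1$ when $p\geq 1$) and already covers $\theta=0$, where it agrees with $\hd S_{p,q}=1$. Your additional remark that continuity at $\theta=0$ is the only non-automatic point is exactly the relevant observation.
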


Moving on into the realm of Assouad-type dimensions, Theorem \ref{affinespec} shows that these spirals exhibit two phase transitions, that is, points where the spectrum is non-differentiable. Moreover, these phase transitions are genuine in the sense that their left and right derivatives are necessarily distinct.

\begin{theorem}\label{affinespec}
Let $0 < p \leq q$. If $0 < p < 1$, then
$$
\as S_{p, q} = \begin{cases}
\frac{2+q-p}{(1+q)(1-\theta)}  &\textnormal{if $0 \leq \theta < p/(1+q)$}\\
\frac{2+q-\theta(1+q)}{(1+q)(1-\theta)}  &\textnormal{if $p/(1+q) \leq \theta < q/(1+q)$}\\
2 &\textnormal{if $q/(1+q) \leq \theta < 1$}
\end{cases}.
$$
Otherwise, if $p \geq 1$, then
$$
\as S_{p, q} = \begin{cases}
\frac{p - \theta(p-1)}{p(1-\theta)}  &\textnormal{if $0 \leq \theta < p/(1+q)$}\\
\frac{2+q-\theta(1+q)}{(1+q)(1-\theta)}  &\textnormal{if $p/(1+q) \leq \theta < q/(1+q)$}\\
2 &\textnormal{if $q/(1+q) \leq \theta < 1$}\\
\end{cases}.
$$
\end{theorem}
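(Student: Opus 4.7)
The plan is to compute $\as S_{p,q}$ by estimating $N_r(B(x,R) \cap S_{p,q})$ for $r = R^{1/\theta}$ and $x \in S_{p,q}$, and taking the supremum over such balls. The loop of $S_{p,q}$ at parameter $t$ traces an ellipse of semi-axes $t^{-p}$ and $t^{-q}$, and consecutive loops are separated by $\sim t^{-p-1}$ on the left/right and $\sim t^{-q-1}$ at the top/bottom. Comparing these separations to $r$ yields three natural regimes: a \emph{thin} regime $t < r^{-1/(1+q)}$ where all separations exceed $r$; a \emph{medium} regime $r^{-1/(1+q)} \leq t < r^{-1/(1+p)}$ where top and bottom arcs merge vertically while the sides remain separated horizontally; and a \emph{thick} regime $t \geq r^{-1/(1+p)}$ where the spiral appears locally $2$-dimensional at scale $r$. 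The covering count is bounded by arc length in the thin regime, by the merged band area in the medium regime, and by the enclosed ellipse area in the thick regime.

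For the lower bound I will exhibit two families of balls, both centred near the origin (on the spiral, at a point with parameter much larger than the scales involved), indexed by $T \to \infty$. In the first family, take $R = T^{-p}$, the $x$-amplitude of the $T$-th loop, so that (up to a negligible shift) the ball captures the spiral from parameter $T$ onwards; then $r = T^{-p/\theta}$, and for $\theta < p/(1+q)$ the thin regime inside the ball extends up to $t \sim r^{-1/(1+q)}$. Arc-length counting via $\int_T^{r^{-1/(1+q)}} t^{-p}\,dt/r$ yields $N_r \sim T^{p(2+q-p)/(\theta(1+q))}$, which equals $(R/r)^{(2+q-p)/((1+q)(1-\theta))}$ and matches the first formula; when $p \geq 1$ the integral saturates at its lower limit $T^{1-p}$, giving the modified first formula. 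In the second family, take $R = T^{-q}$, the $y$-amplitude of the $T$-th loop. Now only the top and bottom arcs of loops with $t \in [T, T^{q/p}]$ lie in the ball, each clipped to arc length $\sim R$. When $\theta \in [p/(1+q), q/(1+q))$ the clipped arcs at thin-regime parameters $t \in [T, r^{-1/(1+q)}]$ remain well-separated (their vertical gaps $\sim t^{-q-1}$ still exceed $r$), and counting yields $N_r \sim (R/r)^{(2+q-\theta(1+q))/((1+q)(1-\theta))}$, matching the middle formula. When $\theta \geq q/(1+q)$ the clipped top and bottom arcs merge vertically, filling the upper and lower half-discs of the ball, so $N_r \sim (R/r)^2$.

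For the upper bound I will show that no ball exceeds these counts. Since the spiral is densest near the origin, the supremum is achieved by balls centred arbitrarily close to it, and it suffices to estimate $N_r(B(0,R) \cap S_{p,q})$ for all admissible $R$. The three-regime decomposition bounds the count from above, and the constraint $r = R^{1/\theta}$ controls how much of the thin regime can sit inside $B(0,R)$; this is what separates the three $\theta$-ranges, with the optimal radius being $R = T^{-p}$ for small $\theta$, $R = T^{-q}$ for intermediate $\theta$, and any $R$ inside the thick region for large $\theta$. The main obstacle will be the medium regime, where the anisotropic merging of arcs (vertically but not horizontally) requires careful tracking of which angular portions of each loop merge with their neighbours; the two phase transitions at $p/(1+q)$ and $q/(1+q)$ emerge precisely from balancing the two separation rates $t^{-p-1}$ and $t^{-q-1}$ against the scale relation $r = R^{1/\theta}$.
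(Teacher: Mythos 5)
Your lower-bound constructions are correct and are essentially the ones the paper uses: the ball of radius $R\approx T^{-p}$ containing the full turns $T\le k\lesssim r^{-1/(1+q)}$, counted by arc length, gives the first regime (including the saturation of the sum at $T^{1-p}$ when $p\ge 1$), and the ball of radius $R\approx T^{-q}$ containing clipped top/bottom arcs of length $\approx R$ for the vertically $r$-separated turns $k\lesssim r^{-1/(1+q)}$ gives the middle regime; your exponent arithmetic checks out in both cases, and the condition $r^{-1/(1+q)}\le T^{q/p}$ is exactly $\theta\ge p/(1+q)$, so the two families split the $\theta$-ranges correctly. Your direct filling argument for $\theta\ge q/(1+q)$ is a genuine alternative to the paper, which disposes of that range by continuity of the spectrum and \cite[Corollary 3.6]{Spectraa}; it is correct once you note that what gets filled at scale $r$ is a rectangle of dimensions $\approx R\times R$ (the clipped arcs sit at heights $t^{-q}\in[R^{q/p},R]$ with gaps $\lesssim T^{-q-1}\le r$), not the whole half-disc, which is all you need. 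The reduction to balls centred essentially at the origin is treated at the same level of rigour as in the paper.

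The gap is the upper bound, which is half the theorem and which you leave as a plan. Two concrete points. First, "bounded by the merged band area in the medium regime" is not yet an argument: the union of the turns $L_q\lesssim k\lesssim L_p$ is not contained in a thin band (each such turn still sweeps all heights up to $\pm k^{-q}$), so you must specify which region's area you use and why it controls the covering number; as stated this step could not be checked. Second, the "careful tracking of which angular portions of each loop merge with their neighbours" that you flag as the main obstacle is unnecessary, and pursuing it is the hard way round. The paper's key observation is that once turns are vertically merged, i.e. $k\ge L_q$ with $L_q\approx\delta^{-1/(1+q)}$, the entire tail $\bigcup_{k\ge L_q}S_{p,q}^k$ lies in a rectangle of height $\approx L_q^{-q}$ and width $\approx\min\{\delta^\theta,\,L_q^{-p}\}$, and the trivial count of $\delta$-squares for this rectangle already matches the claimed exponent exactly in both $\theta$-ranges; the thin regime is then covered by arc length (full turns for $l_p\lesssim k\lesssim L_q$, arcs of length $\approx\delta^\theta$ for $l_q\lesssim k\lesssim l_p$), with the sums evaluated via Lemma \ref{circumlem} and the three cases $p<1$, $p=1$, $p>1$ checked separately. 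Until you either carry out your banding analysis in full or adopt such a crude-area bound and verify that the resulting three terms are dominated by the claimed formulas, the upper bound is missing.
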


\begin{figure}[h!]
\begin{center}
\includegraphics[width = \linewidth]{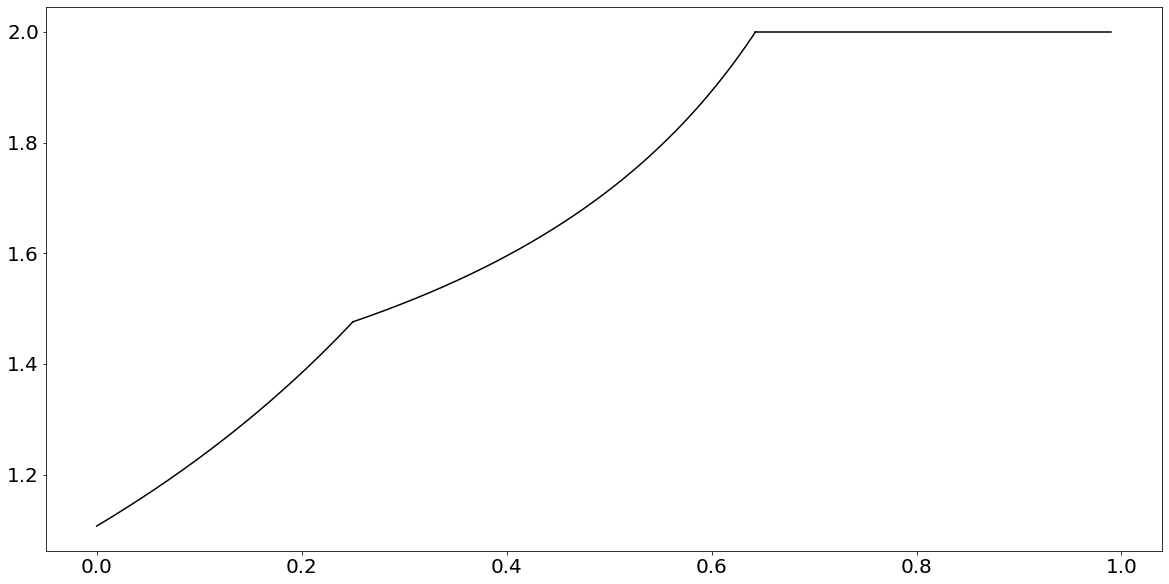}
\caption{A plot of $\as S_{p, q}$ ($y$-axis)  against $\theta$ ($x$-axis) for $p = 1.1$ and $q = 1.8$.}
\end{center}
\end{figure}
The reader familiar with \cite{fraser:spirals} may be surprised to see that the first phase transition occurs at $p/(1+q)$, rather than $p/(1+p)$. Indeed, this shows an unexpected and subtle interaction between the parameters. Theorem \ref{affinespec} also shows that elliptical polynomial spirals have maximal Assouad dimension.
\begin{corollary}
For all $0 < p \leq q$, $\ad S_{p, q} = 2$.
\end{corollary}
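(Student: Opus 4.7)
The plan is to combine the trivial ambient upper bound with the lower bound coming directly from Theorem \ref{affinespec}. Since $S_{p,q} \subset \R^2$, the monotonicity of Assouad dimension under ambient embedding (or the fact that $\ad \R^2 = 2$) gives the immediate upper bound
\[
\ad S_{p,q} \leq 2.
\]

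For the matching lower bound I would appeal to the standard inequality $\as^\theta E \leq \ad E$, valid for every $\theta \in [0,1)$ and every set $E$; this is part of the basic theory of the Assouad spectrum as developed in Fraser--Yu and is used routinely throughout the paper. Taking any $\theta \in [q/(1+q), 1)$, Theorem \ref{affinespec} yields $\as S_{p,q} = 2$ (this holds in both regimes $p<1$ and $p\geq 1$, since the third branch of each piecewise formula is identically $2$). Consequently $\ad S_{p,q} \geq 2$, and combining the two inequalities completes the argument.

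There is essentially no obstacle: the work has already been done in establishing Theorem \ref{affinespec}, and the corollary is a direct readout of the third branch of the spectrum together with the dimensional bound of the ambient space. The only care needed is to note that the conclusion holds uniformly in $p,q$ with $0 < p \leq q$, because the relevant third branch appears in both cases of Theorem \ref{affinespec} and the threshold $q/(1+q)$ is strictly less than $1$, so a valid $\theta$ exists in every case.
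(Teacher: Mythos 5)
Your argument is correct and matches the paper's (implicit) reasoning exactly: the corollary is stated as a direct consequence of Theorem \ref{affinespec}, using that the Assouad spectrum is bounded above by the Assouad dimension (which is at most $2$ for a planar set) and that the third branch of the spectrum equals $2$ for $\theta \in [q/(1+q),1)$ in both parameter regimes. No gaps.
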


Lastly, the relationship between elliptical polynomial spirals and concentric ellipses is worthy of comment. Let us define
$$
C_{p, q} = \bigcup\limits_{n \in \N} E((2\pi n)^{-p}, (2\pi n)^{-q})
$$
where $E(x, y)$ ($x \geq y$) denotes the ellipse centred on the origin with major axis of length $2x$ and minor axis of length $2y$. See Figure \ref{conelip}. It is not surprising that $C_{p, q}$ is dimensionally equivalent to $S_{p, q}$ and our arguments apply equally well to such sets, since it is not too hard to show that the covering number of $S_{p, q}^k$ is equal to that of $E((2\pi k)^{-p}, (2\pi k)^{-q})$ up to multiplicative constants depending only on $p$ and $q$.

\begin{corollary}
Theorem \ref{affineid} and Theorem \ref{affinespec} hold with $S_{p, q}$ replaced by $C_{p, q}$.
\begin{proof}
This  follows immediately upon observing that $S_{p, q} \cap \{ z \in \mathbb{C} : \textnormal{Re}(z) < 0\}$ is bi-Lipschitz equivalent to $C_{p, q} \cap \{ z \in \mathbb{C} : \textnormal{Re}(z) < 0\}$.
\end{proof}
\end{corollary}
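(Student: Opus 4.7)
The plan is to combine three standard ingredients: bi-Lipschitz invariance of $\dim_\theta$ and $\as$ (immediate from their cover-based definitions, since bi-Lipschitz maps distort diameters by at most a constant factor), finite stability of these dimensions (or, at $\theta = 0$, countable stability of Hausdorff dimension), and the stated bi-Lipschitz equivalence of $S_{p,q} \cap H$ with $C_{p,q} \cap H$, where $H := \{z \in \C : \textnormal{Re}(z) < 0\}$. Given these, the corollary reduces to checking that the full sets $S_{p,q}$ and $C_{p,q}$ have the same relevant dimensions as their restrictions to $H$.

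For $C_{p,q}$ this reduction is immediate: the isometry $z \mapsto -z$ sends $C_{p,q} \cap H$ bijectively onto $C_{p,q} \cap \{\textnormal{Re}(z) > 0\}$, preserving each ellipse in the defining union setwise. For $S_{p,q}$ the analogous reduction uses the parameter translation $t \mapsto t + \pi$: the induced map $\phi(t)\cos t + i\psi(t)\sin t \mapsto -\phi(t+\pi)\cos t - i\psi(t+\pi)\sin t$ carries (the large-$t$ part of) $S_{p,q} \cap H$ onto (the large-$t$ part of) $S_{p,q} \cap \{\textnormal{Re}(z) > 0\}$, and has bi-Lipschitz constants uniform for $t$ bounded away from zero because $\phi(t+\pi)/\phi(t), \psi(t+\pi)/\psi(t) \to 1$ as $t \to \infty$. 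Any remaining bounded portion of the spiral is a finite union of smooth arcs, which can be absorbed via (finite or countable) stability without changing the relevant dimension.

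The heart of the matter is thus the bi-Lipschitz equivalence $S_{p,q} \cap H \approx C_{p,q} \cap H$. I would exhibit it arc-by-arc: for each sufficiently large integer $n$, map the spiral arc parameterised by $t \in ((2n+1/2)\pi, (2n+3/2)\pi)$ onto the half-ellipse $E((2\pi n)^{-p}, (2\pi n)^{-q}) \cap H$ by freezing the winding functions, sending $t^{-p}\cos t + i t^{-q}\sin t \mapsto (2\pi n)^{-p}\cos t + i(2\pi n)^{-q}\sin t$. The main obstacle, and essentially the only substantive computation, is to verify that the resulting combined map has bi-Lipschitz constants uniform in $n$. The upper Lipschitz bound follows from the mean-value estimate $|t^{-p} - (2\pi n)^{-p}| = O(n^{-p-1})$ on each interval, which is negligible compared with the arc diameter of order $n^{-p}$, and similarly for the imaginary coordinate with $q$ in place of $p$. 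The lower Lipschitz bound requires a case split: for pairs of points lying on the same arc the same estimate suffices, while for pairs on distinct arcs one exploits the vertical separation of order $n^{-q-1}$ between consecutive spiral arcs (and between consecutive ellipses) where they cross the imaginary axis. The finitely many small-$n$ pieces can then be matched by arbitrary bi-Lipschitz correspondences between smooth arcs, completing the equivalence.
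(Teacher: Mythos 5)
Your overall route is the same as the paper's: the paper's proof is exactly the assertion that $S_{p,q}\cap H$ and $C_{p,q}\cap H$, $H=\{\textnormal{Re}(z)<0\}$, are bi-Lipschitz equivalent, with the verification left to the reader, and your preliminary reduction of the full sets to their half-plane restrictions (the isometry $z\mapsto -z$ for $C_{p,q}$, the shift $t\mapsto t+\pi$ for $S_{p,q}$, finite stability of the Assouad spectrum and of the upper intermediate dimension for the upper bounds, monotonicity for the lower bounds) is reasonable. The gap is in your verification of the equivalence itself.

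The angle-matching (``freezing'') map $t^{-p}\cos t+it^{-q}\sin t\mapsto (2\pi n)^{-p}\cos t+i(2\pi n)^{-q}\sin t$ is not uniformly bi-Lipschitz over the full parameter range, and your argument for its lower Lipschitz bound does not work even where the map is fine. The estimate $|t^{-p}-(2\pi n)^{-p}|=O(n^{-p-1})$ being ``negligible compared with the arc diameter $n^{-p}$'' controls nothing for pairs of points whose mutual distance is itself of order $n^{-p-1}$ or smaller, and such pairs exist on every arc; a lower Lipschitz bound requires comparing coordinate \emph{differences} of pairs, not individual displacements against the diameter (the same objection applies to your inter-arc case, where the displacements are of the same order as the gaps $\asymp n^{-p-1}$, $\asymp n^{-q-1}$, so constants matter). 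Concretely, take $t_{1,2}=(2n+1)\pi\mp a$ for a fixed small $a>0$. Then $\cos t_1=\cos t_2=-\cos a$, so the two spiral points have horizontal offset $\asymp a\,n^{-p-1}$ (from the decay of $t^{-p}$) and vertical offset $\asymp a\,n^{-q}$, hence are at distance $\asymp a\max(n^{-p-1},n^{-q})$; their images have \emph{equal} $x$-coordinates and are at distance $\asymp a\,n^{-q}$. The map therefore contracts such pairs by a factor $\asymp n^{\,q-p-1}$, which is unbounded whenever $q>p+1$ --- a case covered by both theorems (e.g.\ $p=1/2$, $q=2$). So the construction as stated fails for $q>p+1$: near the negative real axis one must match the two curves by $x$-coordinate or by arc length from the leftmost point rather than by angle (or bypass bi-Lipschitz maps altogether and compare covering numbers of $S_{p,q}^k$ and $E((2\pi k)^{-p},(2\pi k)^{-q})$ turn by turn, which is all the dimension computations actually require). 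Even for $q\le p+1$ the same-arc lower bound needs the two-point computation (the $y$-differences agree up to $1+O(1/n)$, and the error $O(n^{-p-1}|t_1-t_2|)$ in the $x$-differences must be absorbed by the $y$-separation $\asymp n^{-q}|t_1-t_2|$), which your proposal does not supply.
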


\begin{figure}[h!]
\begin{center}
\includegraphics[width = \linewidth]{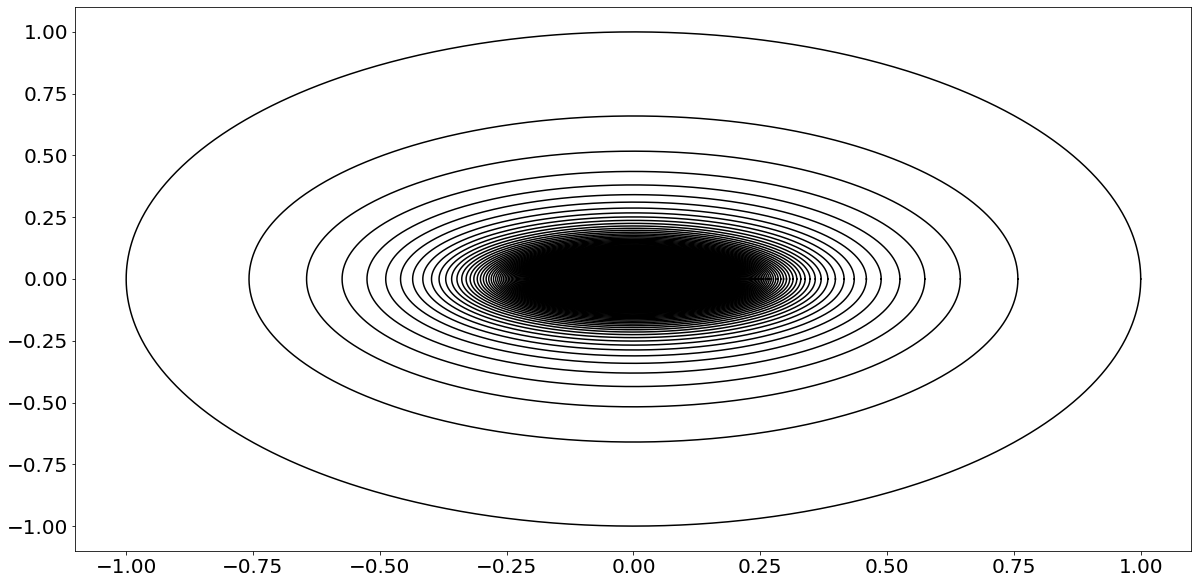}
\caption{A family of concentric ellipses $C_{p, q}$ dimensionally equivalent to $S_{p, q}$, where $p = 0.4$ and $q = 0.6$.}
\label{conelip}
\end{center}

\end{figure}

\subsection{Applications}\label{deformsec}

In this section we use dimension theoretic information to examine the regularity of H\"older mappings that deform one elliptical polynomial spiral into another. The behaviour of dimension under H\"older mappings has been widely studied, and offers insight into permissible $\alpha$ for which there may exist an $\alpha$-H\"older map transforming a set $X$ onto a set $Y$. For example, Corollary \ref{bdspsq} allows us to glean such information from the box dimensions of $S_{p, q}$ and $S_{r, s}$.

\begin{theorem}\label{bdregthm}
Let $0 < p \leq q$ and $0 < r \leq s$ with $r \leq 1$. Suppose $f : S_{p, q} \rightarrow S_{r, s}$ is $\alpha$-H\"older. If $p \leq 1$, then
$$
\alpha \leq \frac{(2+q-p)(1+s)}{(2+s-r)(1+q)}.
$$
Otherwise, if $p > 1$, then
$$
\alpha \leq \frac{1 + s}{2 + s - r}.
$$
\end{theorem}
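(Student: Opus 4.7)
The plan is to invoke the standard fact that any $\alpha$-H\"older map distorts upper box dimension by at most a factor of $1/\alpha$; that is, for $f : E \to \R^n$ that is $\alpha$-H\"older we have $\ubd f(E) \leq \alpha^{-1}\ubd E$. Since the theorem concerns deformations of one spiral onto another, I read the hypothesis as implicitly asking $f$ to be surjective onto $S_{r,s}$, so that this inequality gives a genuine lower bound on $\alpha$:
\[
\bd S_{r, s} \;=\; \ubd f(S_{p,q}) \;\leq\; \tfrac{1}{\alpha}\, \bd S_{p,q},
\qquad\text{hence}\qquad \alpha \;\leq\; \frac{\bd S_{p,q}}{\bd S_{r,s}}.
\]
Here I use that Corollary \ref{bdspsq} establishes equality of upper and lower box dimension for these spirals, so $\bd S_{p,q}$ and $\bd S_{r,s}$ are well-defined.

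Next I would simply substitute the explicit values from Corollary \ref{bdspsq} and split into the two cases given in the statement. Since the hypothesis $r \leq 1$ ensures $\bd S_{r,s} = (2 + s - r)/(1 + s)$, we obtain the following: when $p \leq 1$, the numerator is $\bd S_{p,q} = (2+q-p)/(1+q)$, and the ratio simplifies directly to
\[
\alpha \;\leq\; \frac{(2+q-p)(1+s)}{(2+s-r)(1+q)};
\]
when $p > 1$, the numerator collapses to $\bd S_{p,q} = 1$, yielding
\[
\alpha \;\leq\; \frac{1+s}{2+s-r}.
\]
The two cases agree at $p=1$, since $(2+q-1)/(1+q) = 1$, so there is no boundary issue.

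There is no real obstacle here: the proof is a one-line application of the H\"older scaling inequality for upper box dimension followed by substitution of the formulas already established in Corollary \ref{bdspsq}. The only subtlety worth flagging in the writeup is the (tacit) surjectivity assumption that makes the bound meaningful, and the observation that the sharper bound promised in (\ref{profbound}) will require the more delicate dimension-profile machinery developed in the sequel rather than this elementary argument.
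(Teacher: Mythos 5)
Your argument is correct and is essentially identical to the paper's proof: both apply the standard inequality $\ubd f(E) \leq \alpha^{-1}\ubd E$ for $\alpha$-H\"older maps and then substitute the box dimensions from Corollary \ref{bdspsq}, with the case split at $p=1$. The surjectivity point you flag is treated the same (tacit) way in the paper, which also equates $\bd f(S_{p,q})$ with $\bd S_{r,s}$.
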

\begin{proof}
Let $p \leq 1$. By the standard properties of box-counting dimensions, see \cite[Chapter 2]{falconer},
$$
\frac{2 + s - r}{1+s} = \bd f(S_{p, q}) \leq \frac{1}{\alpha} \bd S_{p, q} = \frac{1}{\alpha}\frac{2 + q - p}{1+q},
$$
from which the first result follows. The case for $p > 1$ is similar.
\end{proof}

Theorem \ref{bdregthm} provides a non-trivial bound on $\alpha$ when $\bd S_{r, s} > \bd S_{p, q}$. However, it is possible to do better using dimension profiles. Intuitively, the $m$-dimensional profile may be thought of as the dimension of an object when viewed from an $m$-dimensional viewpoint. In favour of brevity we omit a thorough introduction to dimension profiles, which may be found in \cite{bufafr:2019}. In the following lemma, we bound the upper $2\alpha$-profiles of $S_{p, q}$, denoted $\uid^{2\alpha} S_{p, q}$, by a quantity strictly less than the dimension for $\theta > 0$, $p < 1$ and $1/2 < \alpha < 1$. This is depicted in Figure \ref{profcompfig}. 

\begin{lemma}\label{profilelemma}
Let $0 < p \leq q$ and $\theta \in [0, 1]$. If $p \leq 1$, then
\begin{equation*}\label{profilebound}
\uid^{2\alpha} S_{p, q} \leq \begin{cases}
2\alpha & 0 < \alpha \leq 1/2\\
\frac{\alpha(p + q + 2\theta(1-p))}{\alpha(p+q)+\theta(1-p) }
& 1/2 < \alpha < 1
\end{cases}.
\end{equation*}
\begin{proof}
Index-$\alpha$ fractional Brownian motion is almost surely $(\alpha -\varepsilon)$-H\"older for all $\varepsilon > 0$ \cite{kahane:book}. Hence, for each $\varepsilon > 0$, Lemma \ref{holderlem} tells us that
$$
\uid B_\alpha(S_{p, q}) \leq \begin{cases}
2 & 0 < \alpha \leq 1/2\\
\frac{p + q + 2\theta(1-p)}{ (\alpha -\varepsilon)(p+q) + \theta(1-p)}
& 1/2 < \alpha < 1
\end{cases}
$$
almost surely. Then, letting $\varepsilon \rightarrow 0$, by \cite[Theorem 3.4]{bu:2020} we have
$$
\uid^{2\alpha} S_{p, q} = \alpha \uid B_\alpha(S_{p, q})\leq \begin{cases}
2\alpha & 0 < \alpha \leq 1/2\\
\frac{\alpha(p + q + 2\theta(1-p))}{\alpha(p+q)+\theta(1-p) }
& 1/2 < \alpha < 1
\end{cases}
$$
almost surely. This concludes the proof, since $\uid^{2\alpha} S_{p, q}$ has no random component.
\end{proof}
\end{lemma}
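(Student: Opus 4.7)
The plan is to bypass the direct potential-theoretic definition of the profiles entirely, exploiting the formula $\uid^{2\alpha} S_{p,q} = \alpha\, \uid B_\alpha(S_{p,q})$ (almost surely) from \cite[Theorem 3.4]{bu:2020}, where $B_\alpha$ denotes index-$\alpha$ fractional Brownian motion in $\R^2$. Since Lemma \ref{holderlem} already gives a deterministic bound on $\uid f(S_{p,q})$ for any $\alpha$-Hölder $f$, the strategy is to couple these two facts: apply Lemma \ref{holderlem} sample-path-wise to $f = B_\alpha$, then transfer the resulting bound to $\uid^{2\alpha} S_{p,q}$ using the fBm identity.

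Concretely, I would first invoke the classical regularity result that for every $\varepsilon > 0$, $B_\alpha$ is almost surely $(\alpha - \varepsilon)$-Hölder on any bounded set (see \cite{kahane:book}). Applying Lemma \ref{holderlem} with exponent $\alpha - \varepsilon$ then yields, almost surely,
\[
\uid B_\alpha(S_{p,q}) \;\leq\; \begin{cases} 2 & \text{if } \alpha - \varepsilon \leq 1/2, \\ \dfrac{p+q+2\theta(1-p)}{(\alpha-\varepsilon)(p+q)+\theta(1-p)} & \text{if } \alpha - \varepsilon > 1/2. \end{cases}
\]
The second step is to let $\varepsilon \to 0$, using continuity of the right-hand side in $\varepsilon$, which yields the corresponding bound with $\alpha$ in place of $\alpha - \varepsilon$. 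Multiplying through by $\alpha$ and applying the identity $\uid^{2\alpha} S_{p,q} = \alpha\, \uid B_\alpha(S_{p,q})$ gives exactly the claimed inequality.

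The final step is a derandomisation remark: $\uid^{2\alpha} S_{p,q}$ is a deterministic quantity depending only on the set $S_{p,q}$, so an almost sure upper bound is in fact a deterministic upper bound. I do not anticipate a significant obstacle here — the work of establishing the Hölder covering estimate has already been done in Lemma \ref{holderlem}, and the fBm formula converts this into a profile bound essentially for free. The main subtlety to handle carefully is the $\varepsilon \to 0$ limit at the critical value $\alpha = 1/2$, where one must verify that the two cases of Lemma \ref{holderlem} match up continuously so that no additional case analysis is required when $\alpha - \varepsilon$ crosses the threshold.
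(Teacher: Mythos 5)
Your proposal is correct and follows essentially the same route as the paper's proof: apply Lemma \ref{holderlem} pathwise to index-$\alpha$ fractional Brownian motion using its almost sure $(\alpha-\varepsilon)$-H\"older regularity, let $\varepsilon \to 0$, invoke the identity $\uid^{2\alpha} S_{p,q} = \alpha\, \uid B_\alpha(S_{p,q})$ from \cite[Theorem 3.4]{bu:2020}, and conclude by noting the profile is deterministic. Your extra remark about matching the two cases at the threshold $\alpha = 1/2$ is a fine precaution but does not alter the argument.
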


\begin{figure}[h!]
\begin{center}
\includegraphics[width = \linewidth]{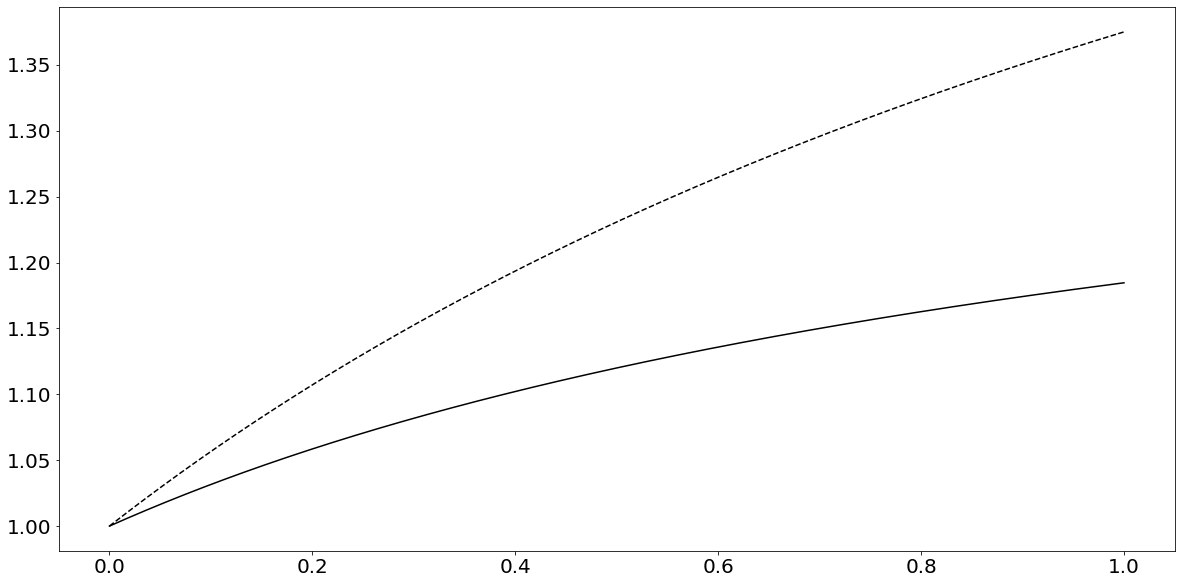}
\caption{A plot of $\dim_\theta S_{p,q}$ (dashed) and the upper bound of $\uid^{2\alpha} S_{p, q}$ ($y$-axis) against $\theta$ ($x$-axis) for $\alpha = 0.7$, $p = 0.4$ and $q = 0.6$.}
\label{profcompfig}
\end{center}
\end{figure}

It is clear from Lemma \ref{profilelemma} that we may produce a bound strictly superior to that from Theorem \ref{bdregthm} for all parameter configurations with $p < 1$ using dimension profiles. This improvement is illustrated in Figure \ref{alphaestfig}. For larger $p$, the two approaches are equivalent.

\begin{figure}[ht]
\begin{center}
\includegraphics[width = \linewidth]{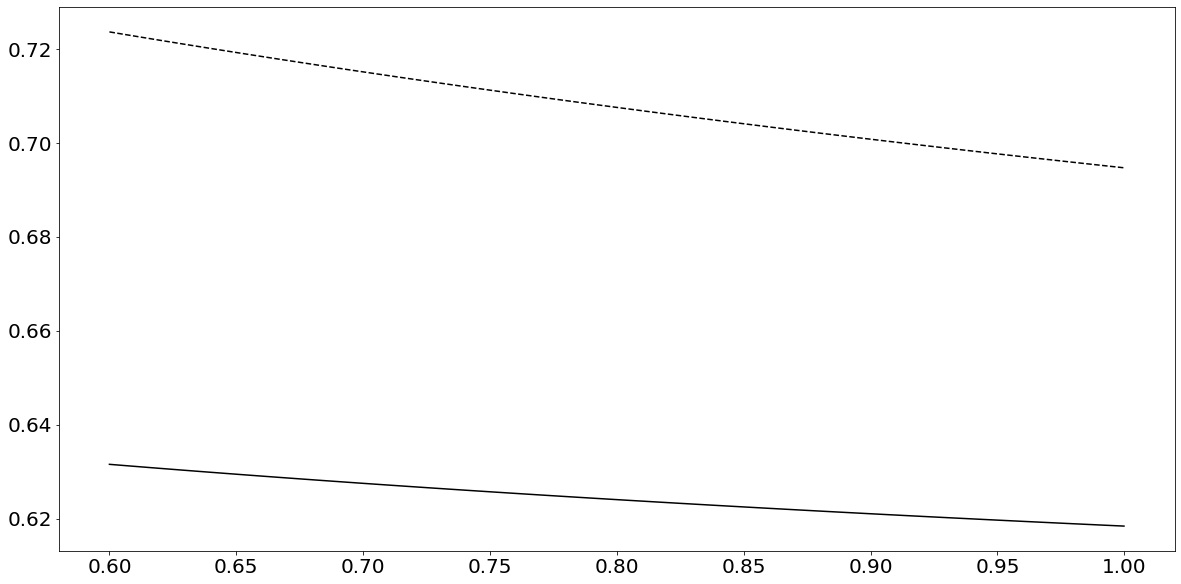}
\caption{Bounds on the H\"older exponent of $f : S_{p,q} \rightarrow S_{r, s}$ against the value of $q$ (x-axis) when $p = 0.6, r=0.2$ and $s= 0.1$. The bounds derived from the dimension profiles (Theorem \ref{main}) and the box-counting dimension (Theorem \ref{bdregthm}) correspond to the solid and dashed lines, respectively.}\label{alphaestfig}
\end{center}
\end{figure}

\begin{theorem}\label{main}
Let $0 < p \leq q$ and $0 < r \leq s$. If $p \leq 1$, $r \leq 1$ and
$f : S_{p, q} \rightarrow S_{r, s}$ is $\alpha$-H\"older, then
$$
\alpha \leq \frac{p + q + r + s - pr + qs}{(2 + s - r)(p + q)}.
$$
\begin{proof}
The target bound is strictly greater than $1/2$, and so we may assume without loss of generality that $\alpha > 1/2$. The discrepancy between the profile and the dimension is maximised when $\theta = 1$. Thus, set $\theta = 1$, and observe from (\ref{profbound}), Lemma \ref{profilelemma} and Corollary \ref{bdspsq} that
$$
\dim_1 S_{r, s} =  \frac{2 + s -r}{1+s}\leq \frac{1}{\alpha} \overline{\dim}_1^{2\alpha} S_{p, q}\leq
 \frac{p + q + 2(1-p)}{\alpha(p+q)+(1-p)},
$$
from which the result follows on re-expressing the inequality in terms of $\alpha$. 
\end{proof}
\end{theorem}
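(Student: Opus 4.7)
The strategy is to combine the dimension-profile upper bound from Lemma \ref{profilelemma} with the standard H\"older distortion inequality for profiles, choosing the value of $\theta$ that maximises the gap between profile and intermediate dimension of $S_{p,q}$.

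First I would reduce to the case $\alpha > 1/2$. The right-hand side of the claimed bound exceeds $1/2$: clearing denominators gives
$$2(p+q+r+s-pr+qs) - (2+s-r)(p+q) = 2r+2s+(q-p)(r+s) > 0,$$
under the hypotheses $0<p\le q$ and $0<r\le s$. Hence for $\alpha \le 1/2$ the conclusion is automatic, and I may assume $\alpha > 1/2$.

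For $\alpha > 1/2$, I would invoke the profile distortion inequality \cite[Theorem 3.1]{bu:2020} (which at $\theta = 1$ reduces to \cite[Theorem 2.6]{fal:2018}),
$$\dim_\theta f(S_{p,q}) \leq \frac{1}{\alpha}\,\uid^{2\alpha} S_{p,q},$$
together with the deformation assumption $f(S_{p,q}) = S_{r,s}$, which gives $\dim_\theta S_{r,s} = \dim_\theta f(S_{p,q})$. Evaluating at $\theta = 1$: Corollary \ref{bdspsq} supplies $\dim_1 S_{r,s} = (2+s-r)/(1+s)$, while Lemma \ref{profilelemma} at $\theta = 1$ (and $\alpha > 1/2$) yields
$$\uid^{2\alpha} S_{p,q} \leq \frac{\alpha\bigl(p+q+2(1-p)\bigr)}{\alpha(p+q)+(1-p)}.$$

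The main conceptual step is the choice $\theta = 1$, which maximises the advantage of Lemma \ref{profilelemma} over the naive box-dimension bound of Theorem \ref{bdregthm} (compare Figure \ref{profcompfig}): as $\theta$ increases from $0$ to $1$ the profile upper bound grows faster than $\dim_\theta S_{r,s}$, so the inequality becomes tightest at the endpoint. Once this choice is made, plugging the two displayed quantities into the profile inequality and rearranging a linear inequality in $\alpha$ gives the claimed bound. The only remaining technical task is this routine algebraic manipulation, so I do not expect a genuine obstacle beyond identifying the optimal $\theta$.
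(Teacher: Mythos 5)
Your proposal is correct and follows essentially the same route as the paper: reduce to $\alpha > 1/2$, apply the profile distortion inequality from \cite[Theorem 3.1]{bu:2020} at $\theta = 1$ together with Lemma \ref{profilelemma} and Corollary \ref{bdspsq}, and rearrange the resulting linear inequality in $\alpha$. Your explicit verification that the target bound exceeds $1/2$ (via $2r+2s+(q-p)(r+s)>0$) is a welcome detail the paper only asserts, and the algebra you defer does indeed produce exactly the stated bound.
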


Recall that if $p=q$, then $S_{p, p} = S_p$ is a generalised hyperbolic spiral. In this case, Theorem \ref{main} offers an appealing upper bound on $\alpha$.

\begin{corollary}\label{spsqthm}
Let $p > q$ and $f : S_p \rightarrow S_q$ be $\alpha$-H\"older. If $p \leq 1$, then
$$
\alpha \leq \frac{p + q}{2p}.
$$
\end{corollary}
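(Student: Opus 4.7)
The plan is to derive the corollary as a direct algebraic specialisation of Theorem \ref{main} to the case where both the source and target spirals have equal exponents. First I would identify $S_p$ with $S_{p,p}$ and $S_q$ with $S_{q,q}$, and verify the hypotheses of Theorem \ref{main}: the source ordering $p \leq p$ and target ordering $q \leq q$ are trivial, $p \leq 1$ is assumed, and the target requires its smaller exponent to be at most $1$, which follows from $q < p \leq 1$.

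Then I would substitute into the bound $\alpha \leq \frac{p + q + r + s - pr + qs}{(2 + s - r)(p + q)}$ from Theorem \ref{main} with $(p,q,r,s)$ replaced by $(p,p,q,q)$. The numerator becomes $2p + 2q - pq + pq = 2(p+q)$, the denominator becomes $(2 + q - q)(2p) = 4p$, and the ratio simplifies to $(p+q)/(2p)$, yielding the claimed inequality.

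The hypothesis $p > q$ is not strictly used in the derivation — it is what makes the bound nontrivial, since if $p = q$ the inequality reduces to $\alpha \leq 1$, which holds automatically for any H\"older exponent. There is no real obstacle to anticipate here: all the substantive content, including the use of dimension profiles via fractional Brownian motion and the intermediate dimension bound from Lemma \ref{profilelemma}, is already encoded in Theorem \ref{main}, so the corollary is essentially a one-line computation.
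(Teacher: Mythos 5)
Your proposal is correct and matches the paper's proof, which is exactly the one-line application of Theorem \ref{main} to $f : S_{p,p} \rightarrow S_{q,q}$; your algebraic simplification of the bound to $(p+q)/(2p)$ and your check that $r = q \leq 1$ follows from $q < p \leq 1$ are both accurate.
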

\begin{proof}
Apply Theorem \ref{main} to $f : S_{p, p} \rightarrow S_{q, q}$. 
\end{proof}

In \cite{fraser:spirals}, it was seen that the Assouad spectrum provided the most information on H\"older exponents in the context of the winding problem (mapping a line segment to a spiral). However, it is easily verified that the same tool, \cite[Theorem 4.11]{Spectraa}, provides only trivial information in our setting (mapping a spiral to a spiral). Conversely, in the context of the winding problem, dimension profiles provide no new information. Thus, it is interesting to see that the regimes are inverted in the context of spiral deformation, with the Assouad spectrum providing the least information and the dimension profiles the most.

\section{Preliminaries}\label{prelims}

In preparation for the main proofs, we begin this subsection by setting notation and making a few technical geometric observations. Afterwards, in order to serve as a reference point, we formally define a selection of the dimension theoretic concepts. However, we assume basic familiarity with topics such as Hausdorff dimension and measure, and direct the reader to the classic text \cite{falconer} for a thorough exposition on the fundamentals of dimension theory.

\subsection{Decomposition, notation, and geometric observations}Dimension concerns limiting processes for which fixed multiplicative constants are typically of little consequence. Therefore, we often write $x \lesssim y$ when it is clear there exists a uniform constant $c > 0$ not depending on $x$ and $y$ such that $x \leq c y$. Naturally, we analogously define $\gtrsim$, and write $x \approx y$ if $x \lesssim y$ and $x \gtrsim y$. In circumstances where $c$ is not uniform but depends on certain parameters, say $t_1$, $t_2, \dots$, we write $\lesssim_{t_1,t_2,\dots}$, $\gtrsim_{t_1,t_2,\dots}$ and $\approx_{t_1,t_2,\dots}$ to make this clear.

A useful trick is to decompose $S_{p, q}$ into a countable disjoint union of \emph{full turns}. In particular, we define
\begin{equation}\label{decomp}
S_{p, q} := \bigcup\limits_{k \geq 1} S_{p, q}^k,
\end{equation}
where
$$
S_{p, q}^k = \{t^{-p}\cos t + i t^{-q}\sin t : 2\pi k \leq t < 2\pi(k+1)\}.
$$
Note that, for arithmetic convenience, we have removed the part of $S_{p, q}$ corresponding to $1 < t < 2\pi$ in the definition (\ref{defaffinespiral}) without meaningful loss of generality. The following geometric observation estimates the sum of the $1$-dimensional Hausdorff measures, or length, over a collection of consecutive turns using standard number theoretic estimates.

\begin{lemma}\label{circumlem}
Let $ 0 < p \leq q$. For $k \geq 1$, 
\begin{equation}\label{circim}
\mathcal{H}^1(S_{p, q}^k) \approx_p k^{-p}
\end{equation}
Moreover, for sufficiently large integers $N, M \in \N$ with $M < N$,
\begin{equation}\label{sumest}
\sum\limits_{k = M}^{N} \mathcal{H}^1(S_{p, q}^k) \approx_p
\begin{cases}
N^{1-p} - M^{1-p} &\textnormal{if $p < 1$}\\
\log N - \log M &\textnormal{if $p = 1$}\\
M^{1-p} - N^{1-p} &\textnormal{if $p > 1$}
\end{cases}.
\end{equation}
\end{lemma}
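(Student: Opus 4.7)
The plan is to reduce both parts to a standard arc length calculation followed by an integral test.

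\textbf{Arc length set-up.} Writing $S_{p,q}^k$ as the image of the smooth parametrisation $t\mapsto (x(t),y(t))=(t^{-p}\cos t,\, t^{-q}\sin t)$ on $[2\pi k, 2\pi(k+1)]$, injectivity on this interval gives
\[
\mathcal{H}^1(S_{p,q}^k)=\int_{2\pi k}^{2\pi(k+1)}\sqrt{x'(t)^2+y'(t)^2}\,dt,
\]
where $x'(t)=-pt^{-p-1}\cos t - t^{-p}\sin t$ and $y'(t)=-qt^{-q-1}\sin t + t^{-q}\cos t$.

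\textbf{Upper bound in (\ref{circim}).} Using $\sqrt{a^2+b^2}\leq |a|+|b|$ and $p\leq q$, the integrand is dominated by
\[
|x'(t)|+|y'(t)|\ \lesssim_p\ t^{-p}+t^{-q}+t^{-p-1}+t^{-q-1}\ \lesssim_p\ t^{-p}
\]
on $[2\pi k,2\pi(k+1)]$, so integrating over an interval of length $2\pi$ yields $\mathcal H^1(S_{p,q}^k)\lesssim_p k^{-p}$.

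\textbf{Lower bound in (\ref{circim}).} Arc length dominates the total variation of any coordinate projection, so it suffices to bound the variation of $t\mapsto x(t)=t^{-p}\cos t$. On $[2\pi k,2\pi(k+1)]$ this function takes values close to $+(2\pi k)^{-p}$ near the endpoints and close to $-(2\pi k+\pi)^{-p}$ near the midpoint, so its total variation is at least $\gtrsim k^{-p}$ (for $k$ large; the finitely many small $k$ are handled by compactness and the fact that each $S_{p,q}^k$ has positive length). Combined with the upper bound this gives (\ref{circim}).

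\textbf{Sum estimate.} For (\ref{sumest}) it is enough, by (\ref{circim}), to estimate $\sum_{k=M}^N k^{-p}$. Since $x\mapsto x^{-p}$ is monotone, the standard integral test yields
\[
\sum_{k=M}^{N}k^{-p}\ \approx\ \int_{M}^{N}x^{-p}\,dx
\]
for $M$ sufficiently large, and evaluating the integral gives exactly the three cases ($N^{1-p}-M^{1-p}$, $\log N-\log M$, or $M^{1-p}-N^{1-p}$) depending on whether $p<1$, $p=1$, or $p>1$. The lower-bound direction of the integral comparison is where one needs $M$ large so that the $\approx$ absorbs the boundary terms $M^{-p},N^{-p}$; this is the only mildly delicate point, and it is the reason the statement restricts to sufficiently large $M,N$. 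No step presents a real obstacle: the argument is essentially a careful arc-length computation combined with integral comparison.
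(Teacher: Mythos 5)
Your proof is correct in substance, but for the per-turn estimate (\ref{circim}) it takes a more computational route than the paper. The paper disposes of (\ref{circim}) with a single geometric comparison, trapping the $k$-th turn against the perimeter of a square of sidelength $2(2\pi k)^{-p}$ centred at the origin to get $(2\pi k)^{-p}\leq \mathcal{H}^1(S_{p,q}^k)\leq 8(2\pi k)^{-p}$ in one step, whereas you parametrise the turn, integrate the speed for the upper bound, and use the $x$-coordinate projection for the lower bound; both deliver constants depending only on $p$, and your projection argument actually works exactly for every $k\geq 1$ (at $t=2\pi k$ and $t=2\pi k+\pi$ the abscissa equals $\pm t^{-p}$), so the compactness remark is unnecessary. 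One small point to tidy: $|y'(t)|\leq q t^{-q-1}+t^{-q}$ carries a factor $q$, so your first displayed chain is not literally a $\lesssim_p$ as written; it does still yield a $p$-only constant, since for $t\geq 2\pi$ one has $q t^{-q-1}\leq (2\pi)^{p-1}\bigl(q(2\pi)^{-q}\bigr)t^{-p}$ and $\sup_{q>0}q(2\pi)^{-q}$ is an absolute constant, but this needs saying because the lemma asserts $\approx_p$, not $\approx_{p,q}$. For (\ref{sumest}) you and the paper do essentially the same thing, an integral comparison for $\sum_{k=M}^N k^{-p}$; your aside about the lower-bound direction needing $M$ large is slightly off, since it is the upper bound that picks up the boundary term $M^{-p}$, and because $N\geq M+1$ one has $M^{-p}\lesssim_p \lvert N^{1-p}-M^{1-p}\rvert$ (and $M^{-1}\lesssim \log N-\log M$ when $p=1$), so no largeness assumption on $M$ is in fact required.
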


\begin{proof}
By comparing $\mathcal{H}^1(S_{p, q}^k)$ with the perimeter of a square of sidelength $2(2k\pi)^{-p}$ centred on the origin we may deduce
$$
(2k\pi)^{-p} \leq \mathcal{H}^1(S_{p, q}^k)  \leq 8 (2k\pi)^{-p},
$$
from which (\ref{circim}) follows immediately. (\ref{sumest}) may then be deduced in a standard way. Letting $\floor{t}$ denote the integer part of $t \in \R$, observe that for $p \neq 1$,
\begin{align*}
\sum\limits_{k = M}^{N} \mathcal{H}^1(S_{p, q}^k)  \approx_p \sum\limits_{k = M}^{N} k^{-p}  = \sum\limits_{k = M}^{N} \,\,\int\limits_{k}^{k + 1} \floor{u}^{-p} \,du  \approx_p \frac{1}{1-p} (N^{1-p} - M^{1-p}).
\end{align*}
The case for $p = 1$ follows similarly.
\end{proof}

\subsection{Intermediate dimensions}\label{intdef}
The intermediate dimensions are a family of dimensions, indexed by $\theta \in [0, 1]$ and introduced in \cite{fafrke:2018}, that interpolate between the Hausdorff and upper box counting dimensions.

For bounded $E \subset \R^n$ and $0 < \theta \leq 1$, the  {\em lower intermediate dimension} of $E$ may be defined as
\begin{align*}
\lid E =  \inf \big\{& s\geq 0  :  \mbox{ \rm for all $\epsilon >0$ and all $\delta_0>0$, there exists }  \nonumber\\
&\mbox{$0<\delta\leq \delta_0$ and a cover $ \{U_i\} $ of $E$  such that} \\
 & \mbox{$\delta\leq  |U_i| \leq \delta^\theta $ and 
 $\sum |U_i|^s \leq \epsilon$}  \big\}\nonumber
\end{align*}
and the corresponding {\em upper intermediate dimension} by
\begin{align*}
\uid E =  \inf \big\{& s\geq 0  :  \mbox{ \rm for all $\epsilon >0$, there exists $\delta_0>0$ such that} \nonumber\\
& \mbox{for all $0<\delta\leq \delta_0$, there is a cover $ \{U_i\} $ of $E$} \\
&\mbox{such that $\delta \leq  |U_i| \leq \delta^\theta$ and 
$\sum |U_i|^s \leq \epsilon$}  \big\},\nonumber
\end{align*}
where $|U|$ denotes the diameter of a set $U \subset \R^n$. For $\theta = 0$, define $$\underline{\dim}_0 E = \overline{\dim}_0 E = \hd E,$$
while at $\theta = 1$ it is clear that
$$\lbd E = \underline{\dim}_1 E \textnormal{\,\,\,\,\,\,and\,\,\,\,\,\,}\ubd E = \overline{\dim}_1 E.$$
If $\lid E = \uid E$ we say the $\theta$-intermediate dimension of $E$ \emph{exists} and write $\dim_\theta E$.

\subsection{The Assouad spectrum and dimensions}\label{asdef}

The Assouad spectrum of $F$, a family of dimensions indexed by $\theta \in [0, 1)$ and introduced in \cite{Spectraa}, interpolates between the upper box dimension and the quasi-Assouad dimension. Formally, it is the function $\theta \mapsto \as F$ defined by
\begin{align*}
\as F =  \inf \big\{& \alpha \geq 0  :  \mbox{ \text{$\exists$ $C >0$ such that, for all $0<r<1$ and $x \in F$,} } \nonumber\\
&\mbox{\text{$ N_{r} \big( B(x,r^\theta) \cap F \big) \ \leq \ C(r^\theta / r)^\alpha$ }}  \big\},\nonumber
\end{align*}
where $N_r(E)$ denotes the smallest number of hypercubes of sidelength $r$ required to cover $E$. The Assouad dimension is defined similarly but considers $N_r(B(x, R) \cap F)$ for arbitrary $0 < r < R$, thus removing the restriction on the precise relationship imposed by $\theta$. The limit as $\theta \rightarrow 1$ is known as the quasi-Assouad dimension and, as we shall see, in the context of spirals is equal to the Assouad dimension. For a detailed treatment of Assouad-type dimensions and their various applications we direct the reader to \cite{jon:book}.

\section{Proofs}
\subsection{Proof of Lemma \ref{holderlem}}\label{prooflemsec}
\begin{proof}[\unskip\nopunct] 
Let $0 \leq s \leq 2$ and $0 < \delta < 1$. To aid readability when dealing with particularly complicated exponents, we write $t= -\log \delta$. 

If $0 < \alpha \leq 1/2$, the bound is trivial. Thus, hereafter assume $1/2 < \alpha \leq 1$.

Choose $M \in \N$ to be the smallest integer satisfying
\begin{equation}\label{Mlab}
M \geq \exp\left(\frac{t(s - (1/\alpha) + \theta(2-s))}{1-p + \alpha(p+q)}\right),
\end{equation}
and note that by (\ref{circim}) from Lemma \ref{circumlem},

\begin{equation}\label{precover}
N_{\delta^{1/\alpha}}(S_{p,q}^k) \approx_p \frac{k^{-p}}{\delta^{1/\alpha}}.
\end{equation}
Let the uniform constant associated with the H\"older property of $f$ be $c > 0$. Then, for $k \leq M$, by considering the image of a cover satisfying (\ref{precover}) under $f$, we may obtain a cover of $f(S_{p, q}^k)$ by at most
$$
\approx_{p} \frac{k^{-p}}{\delta^{1/\alpha}}
$$
balls of diameter $c2^{\alpha/2}\delta$. It follows that there exists a constant $d_{c, p, \alpha}$, depending only on $c$, $p$ and $\alpha$, such that we may cover $f(S_{p, q}^k)$ by
$$
d_{c, p, \alpha} \frac{k^{-p}}{\delta^{1/\alpha}} \approx_{c, p, \alpha} \frac{k^{-p}}{\delta^{1/\alpha}}
$$
balls of diameter $\delta$. The remaining region will be covered by balls of diameter $\delta^\theta$. For $k > M$,
\begin{align*}
\bigcup\limits_{k > M} f(S_{p, q}^k) &\subset f( [-M^{-p}, M^{-p}] \times [-M^{-q}, M^{-q}]) \\
&\subseteq [-cM^{-p\alpha}, cM^{-p\alpha}] \times [-cM^{-q\alpha}, cM^{-q\alpha}],
\end{align*}
and such a rectangle may be covered by 
$$
\approx_c \frac{M^{-(p + q)\alpha}}{\delta^{2\theta}}
$$
balls of diameter $\delta^\theta$. Summing over this cover, that we denote $\{U_i\}_i$, gives
\begin{align}\label{coversum}
\sum\limits |U_i|^s \approx_{c, p, \alpha}  \left(\frac{M^{-\alpha(p + q)}}{\delta^{2\theta}}\right) \delta^{\theta s} + \delta^{s}\sum\limits_{k = 1}^{M} \frac{k^{-p}}{\delta^{1/\alpha}} .
\end{align}
If $p \leq 1$, then (\ref{Mlab}) and (\ref{coversum}) imply
\begin{align}\label{prevcalc}
\sum\limits |U_i|^s &\approx_{c, p, \alpha}  M^{-\alpha(p + q)} \delta^{\theta s - 2\theta} + M^{1-p}\delta^{s-(1/\alpha)}\nonumber\\
&\approx_{c, p, \alpha} 2\exp\left(-t\frac{s(\alpha(p+q) + \theta(1-p) ) - (p + q + 2\theta(1-p))}{1 - p + \alpha(p + q)}\right).
\end{align}
Hence, $ \sum |U_i|^s \rightarrow 0$ as $\delta \rightarrow 0$ providing
$$
s > \frac{p + q + 2\theta(1-p)}{\alpha(p+ q) + \theta(1-p)},
$$
and so
$$
\overline{\dim}_\theta f(S_{p, q}) \leq \frac{p + q+ 2\theta(1-p)}{\alpha(p + q) + \theta(1-p)}.
$$
Note that if $p = 1$ this bound equals $1/\alpha$, as required. On the other hand, if $p > 1$, then (\ref{coversum}) implies
\begin{align*}
\sum\limits |U_i|^s &\approx_{c, p, \alpha}  M^{-\alpha(p + q)} \delta^{\theta s - 2\theta} + \delta^{s-(1/\alpha)}\\
&\approx_{c, p, \alpha}  \exp\left(-t\frac{s(\alpha(p+q) + \theta(1-p) ) - (p + q + 2\theta(1-p))}{1 - p + \alpha(p + q)}\right) + \delta^{s- (1/\alpha)}.
\end{align*}
Clearly,
$$
1-p + \alpha(p+q) \geq 1-p + \frac{1}{2}(p + p) = 1,
$$
and so the left-hand term converges to $0$ as $\delta \rightarrow 0$ if 
$$
s > \frac{p + q + 2\theta(1-p)}{\alpha(p + q) + \theta(1 - p)},
$$
while the right hand term requires $s > 1/\alpha$. Hence
$$
 \overline{\dim}_\theta f(S_{p, q}) \leq \max\left\{\frac{p + q + 2\theta(1-p)}{\alpha(p + q) + \theta(1 - p)}, \frac{1}{\alpha}\right\} = \frac{1}{\alpha}.
$$
\end{proof}
\subsection{Proof of Theorem \ref{affineid}}
\begin{proof}[\unskip\nopunct]
The upper bound follows from Lemma \ref{holderlem} applied to the identity mapping. If $p \geq 1$, the upper bound coincides with the trivial lower bound, and so it suffices to assume $0 < p < 1$. Let $0 < \delta < 1$, and define $M \in \N$ to be the smallest integer satisfying
\begin{equation*}
M \geq \exp\left(\frac{t(s - 1 + \theta(2-s))}{1 + q}\right),
\end{equation*}
recalling $t= -\log \delta$. Next, define
$$
s = \frac{p + q + 2\theta(1-p)}{p + q + \theta(1-p)},
$$
and construct a measure $\mu_\delta$ supported on $S_{p, q}^+$ by
\begin{equation}\label{spiralmeasure}
\mu_\delta = \delta^{s-1} \sum\limits_{k = 1}^{M} \mathcal{H}^1 \big|_{S_{p,q}^{+, k}},
\end{equation}
where $\mathcal{H}^1 \big|_{S_{p,q}^{+, k}}$ denotes the restriction of $1$-dimensional Hausdorff measure to $S_{p,q}^{+, k}$.

It is easy to see that
$$
\mu_\delta(S_{p,q}^+) = \delta^{s-1} \sum\limits_{k = 1}^{M} \mathcal{H}^1 (S_{p,q}^{+, k}) \gtrsim_p \delta^{s-1} \sum\limits_{k =1}^{M} k^{-p} \approx_p M^{1-p}\delta^{s-1} \approx_p 1,
$$
with the final calculation similar to that which obtained (\ref{prevcalc}).

Next, in order to apply the mass distribution principle for intermediate dimensions, we must estimate $\mu_\delta(U)$ for arbitrary Borel sets $U$ satisfying $\delta \leq |U| \leq \delta^{\theta}$. First, observe that
$$
\left(\frac{1}{(k-1)^q} - \frac{1}{k^q}\right)-\left(\frac{1}{(k-1)^p} - \frac{1}{k^p}\right) = \frac{1 - (k-1)^{q-p}}{(k-1)^q} - \frac{k^{q-p} - 1}{k^q} \leq 0
$$
for $k > 1$, since $p \leq q$. Hence, up to multiplicative constants depending only on $p$ and $q$, consecutive turns of the spiral are separated by at least
$$
\frac{1}{(k-1)^q} - \frac{1}{k^q}.
$$
An application of the mean value theorem then gives
$$
\frac{1}{(k-1)^q} - \frac{1}{k^q} \geq \frac{q}{k^{q+1}} \geq \frac{q}{M^{1 + q}}
$$
for $2 \leq k \leq M$. It follows that a set $U$ satisfying $\delta \leq |U| \leq \delta^{\theta}$ may intersect at most $|U|M^{1+ q}$ turns that contain mass, up to a constant depending only on $p$ and $q$. Moreover, for each turn it intersects, $U$ may cover a region of mass at most $\delta^{s-1}$ multiplied by the circumference of a ball of diameter $U$. Hence
\begin{align*}
\mu_\delta(U) &\lesssim_{p, q}  (|U|\delta^{s-1})(|U|M^{1 + q}) \\
&= |U|^2 \delta^{s-1}\delta^{-s + 1 - \theta(2-s)}\\
&= |U|^2 \delta^{\theta(s- 2)}\\
&\leq |U|^2 |U|^{s-2} \textnormal{  (since $s < 2$ and $|U| \leq \delta^\theta$)}\\
&= |U|^s.
\end{align*}
The lower bound then follows from the mass distribution principle for intermediate dimensions, see \cite[Proposition 2.2]{fafrke:2018}.
\end{proof}

It is worth remarking that measures of a form similar to (\ref{spiralmeasure}) could be useful for a wide range of sets $E$ with a spiral structure. For example, we might consider the image of a spiral under a map $f$ that distorts the local geometry while preserving the general form. If it were the case that $\hd f(S_{p,q}^k) = t$ for all $k \in \N$, then measures of the form
\begin{equation}
\mu_\delta = \delta^{s-t} \sum\limits_{k = 1}^{M} \mathcal{H}^t \big|_{f(S_{p,q}^k)}
\end{equation}
may be good candidates for use with \cite[Proposition 2.2]{fafrke:2018}.

\subsection{Proof of Theorem \ref{affinespec}}
\begin{proof}[\unskip\nopunct]
If $p = q$, then the result is \cite[Theorem 4.4]{fraser:spirals}, so let $0 < p < q$. For each $0 < \delta < 1$, define $L_p, L_q \in \N$ to be the largest integers such that
\begin{equation}\label{Lpdef}
 \delta \leq \frac{1}{(\pi + 2\pi L_p)^{p}} - \frac{1}{(\pi + 2\pi (L_p + 1) )^{p}}
\end{equation}
and
\begin{equation}\label{Lqdef}
 \delta \leq \frac{1}{(\frac{3\pi}{2} + 2\pi L_q)^{q}} - \frac{1}{(\frac{3\pi}{2} + 2\pi (L_q + 1) )^{q}}.
\end{equation}
Geometrically, $L_p$ and $L_q$ are the maximal indices $k$, such that $S_{p, q}^k$ is separated on the horizontal and vertical axes by at least $\delta$, respectively.  In addition, define the integers $l_p$ and $l_q$ to be the minimal $k$ such that $S_{p, q}^k$ intersects the ball $B(0, \delta^\theta)$ on the horizontal and vertical axes, respectively. In particular,
$$\left(\pi + 2 \pi l_p\right)^{-p} \leq \delta^\theta < \left(\pi + 2\pi (l_p-1)\right)^{-p}$$ 
and
$$\left(\frac{3\pi}{2} + 2 \pi l_q\right)^{-q} \leq \delta^\theta < \left(\frac{3\pi}{2} + 2\pi (l_q-1)\right)^{-q}.$$ 

%Figure \ref{lplqpiclabel} illustrates the geometric significance of the quantities $L_p$ and $l_p$ through an example. Of course, $L_q$ and $l_q$ may be understood similarly by considering the vertical, rather than horizontal, axis. 
Throughout, we use the fact that
$$
S_{p, q} \cap B(0, \delta^\theta) \subseteq \bigcup\limits_{k=l_q}^{\infty} S_{p, q}^k \cap B(0, \delta^\theta).
$$

%\begin{figure}[ht]
%\begin{center}
%\includegraphics[width = 3in]{figures/lplqpic.png}
%\caption{A plot illustrating the quantities $l_p$ and $L_p$. Recall that $l_p$ is the minimal $k$ such that $S_{p, q}^k$ intersects $B(0, \delta^\theta)$ on the horizontal axis, and $L_p$ is the maximal index $k$ such that $S_{p, q}^k$ is separated from neighbouring turns on the horizontal axis by at least $\delta$. The size of $\delta$ is indicated pictorially, to scale, by a line segment in the legend.}
%\label{lplqpiclabel}
%\end{center}
%\end{figure}

The ordering of $L_p, L_q, l_p$ and $l_q$ depends on $\theta$, and gives rise to phase transitions within the spectrum. To determine the order based on a value of $\theta$, first note that 
\begin{equation}\label{ltapprox}
l_t \approx_t \delta^{-{\theta}/{t}}
\end{equation}  
for $t \in \{p, q\}$. Then, for $t  \in \{p, q\}$, it follows from an application of the mean value theorem applied to $f(x) = x^{-t}$ that
$$
\frac{t}{(k+1)^{1+t}} \leq \frac{1}{k^t} - \frac{1}{(k+1)^t} \leq \frac{t}{k^{1+t}}.
$$
This, along with the fact $L_p$ and $L_q$ are the maximal integers satisfying (\ref{Lpdef}) and (\ref{Lqdef}), respectively, implies
\begin{equation}\label{Lpapprox}
L_t \approx_t \delta^{-\frac{1}{1+t}}.
\end{equation}
It is immediate that $l_p \gtrsim_{p, q} l_q$ and $L_p \gtrsim_{p,q} L_q$ for all $\theta \in [0, 1)$ since $p < q$, but we must divide into cases to learn more. By continuity of the Assouad spectrum \cite[Corollary 3.5]{Spectraa} and \cite[Corollary 3.6]{Spectraa}, it suffices to consider $\theta$ in the ranges $0 \leq \theta < p/(1 + q)$ and $p/(1 + q) < \theta < q/(1 + q)$. Throughout, we use the estimate
\begin{equation}\label{rev}
N_\delta(S_{p,q} \cap B(z, \delta^\theta)) \lesssim_{p, q} N_\delta(S_{p, q} \cap B(0, \delta^\theta))
\end{equation}
for all $z \in \mathbb{C}$. This reduction in intuitively clear, since the origin is the densest part of the set $S_{p, q}$ and can be shown via a similar argument to \cite[Theorem 4.4]{fraser:spirals}, which covers the case $p = q$. In particular, if $|z| < 2\delta^\theta$, then subsequent arguments with $B(0, \delta^\theta$) are easily modified up to uniform constants since $B(z,\delta^\theta) \subseteq B(0, 3\delta^\theta)$. On the other hand, if $|z| \geq 2\delta^\theta$ and
$
B(z, \delta^\theta) \cap S_{p, q}^k \neq \emptyset
$
for some $k \geq 1$, then $k^{-p} \gtrsim \delta^\theta$ or $k^{-q} \gtrsim \delta^\theta$, recalling the intersections of $S_{p, q}^k$ with the horizontal and vertical axes are (up to constants) $k^{-p}$ and $k^{-q}$, respectively. Since $p \leq q$, both conditions hold if $k \lesssim \delta^{-\theta/p}$ and $\delta < 1$. Summing over permissible $k \geq 1$ implies
$$
N_\delta(B(z, \delta^\theta) \cap S_{p, q}) \lesssim_{p, q}  \left(\delta^{-\frac{\theta}{p}}\right) \frac{\delta^\theta}{\delta} = \left(\frac{\delta^{\theta}}{\delta}\right)^{\frac{p - (p-1)\theta}{(1-\theta)p}}
$$
as in \cite{fraser:spirals}. This is sufficient to prove \eqref{rev}, since the below proofs show
$$
N_\delta(B(0, \delta^\theta) \cap S_{p, q}) \gtrsim_{p,q} \left(\frac{\delta^{\theta}}{\delta}\right)^{\frac{p - (p-1)\theta}{(1-\theta)p}}
$$
in all cases.
%\cite{Spectraa} provides further details on this reduction in the case of $S_p$ and similar arguments would apply here.

\subsubsection*{\textnormal{\textbf{Case 1:} suppose $\frac{p}{1+q} < \theta < \frac{q}{1+q}$.}}
In order to simplify some geometric estimates, it is convenient to adopt an equivalent definition of the Assouad spectrum in this case. Specifically, we consider minimal coverings of the set $D(0, \delta^\theta) \cap S_{p, q}$, where $D(0, \delta^\theta)$ is a square centred on the origin of sidelength $2\delta^\theta$ and orientated with the co-ordinate axes. By (\ref{ltapprox}) and (\ref{Lpapprox}), for sufficiently small $\delta > 0$,
$$
l_p^{-p} < L_q^{-p} < l_q^{-p}.
$$
For $l_q \leq k \leq L_q$, the set $S_{p, q}^k \cap D(0, \delta^\theta)$ contains at least one arc $A$ such that 
$$
\mathcal{H}^1(A) \approx \delta^\theta,
$$
and so
$$
N_\delta(A) \approx \frac{\delta^\theta}{\delta}.
$$
Turns in the range $l_q \leq k \leq L_q$ are separated by at least $\delta$ on the vertical and horizontal axes, and thus any square of sidelength $\delta$ may intersect at most two of the corresponding arcs. 

It follows that, recalling (\ref{ltapprox}) and (\ref{Lpapprox}),
\begin{align}\label{case2lb}
N_\delta(S_{p, q} \cap D(0, \delta^\theta)) &\gtrsim \sum\limits_{k=l_q}^{L_q}  \delta^{\theta - 1} \\
&\approx_{p, q} \delta^{\theta - 1}\left(\delta^{-\frac{1}{1+q}} - \delta^{-\frac{\theta}{q}}\right) \nonumber\\
&\gtrsim_{p, q} \left(\frac{\delta^\theta}{\delta}\right)^{\frac{2+q-\theta(1+q)}{(1+q)(1-\theta)}}.\nonumber
\end{align}
Hence
$$\as S_{p, q} \geq \frac{2+q-\theta(1+q)}{(1+q)(1-\theta)}.
$$
On the other hand, observe
$$
\bigcup\limits_{k = L_q}^{\infty} S_{p, q}^k \cap D(0, \delta^\theta) \subseteq [-\delta^\theta, \delta^\theta] \times [-(2\pi L_q)^{-q}, (2\pi L_q)^{-q}],
$$
and such a rectangle may be covered by $$\approx_q \frac{\delta^\theta L_q^{-q}}{\delta^2}$$
squares of sidelength $\delta$. The remaining portion may be covered in a similar manner as in (\ref{case2lb}), and we conclude
\begin{align*}
N_\delta(S_{p, q} \cap B(0, \delta^\theta)) &\lesssim_q \frac{\delta^\theta L_q^{-q}}{\delta^2} + \sum\limits_{k = l_q}^{L_q} \delta^{\theta - 1}\\
&\approx_{p, q} \left(\frac{\delta^\theta}{\delta}\right)^{\frac{2+q-\theta(1+q)}{(1+q)(1-\theta)}} + \left(\frac{\delta^\theta}{\delta}\right)^{\frac{2+q-\theta(1+q)}{(1+q)(1-\theta)}}\\
&= 2\left(\frac{\delta^\theta}{\delta}\right)^{\frac{2+q-\theta(1+q)}{(1+q)(1-\theta)}}.
\end{align*}

\subsubsection*{\textnormal{\textbf{Case 2:} suppose $0 \leq \theta < \frac{p}{1+q}$.}}
By (\ref{ltapprox}) and (\ref{Lpapprox}), for sufficiently small $\delta > 0$,
$$
L_p^{-p} < L_q^{-p} < l_p^{-p} < l_q^{-p},
$$
with the gaps between the four integers $L_p, L_q, l_p$ and $l_q$  arbitrarily large. Then, for $k = l_p + 1, \dots, L_q$, we have
$$
S_{p, q}^k \subset B(0, \delta^\theta),
$$
while the turns in this region are separated by at least $\delta$ on the horizontal and vertical axes. Therefore they should be covered individually by at least
$$
\frac{\mathcal{H}^1(S_{p, q}^k)}{\delta} \approx_p \frac{k^{-p}}{\delta}
$$
squares of sidelength $\delta$. 

Hence
\begin{align}\label{case3lb}
N_\delta(S_{p, q} \cap B(0, \delta^\theta)) \gtrsim_p \sum\limits_{k=l_p}^{L_q} \frac{k^{-p}}{\delta}.
\end{align}
This sum may be estimated using Lemma \ref{circumlem}. If $p < 1$, then
\begin{align*}
N_\delta(S_{p, q} \cap B(0, \delta^\theta)) &\gtrsim_p \frac{L_q^{1-p} - l_p^{1-p}}{\delta} \\
&\approx_{p,q} \delta^{\frac{p-1}{1+q} - 1} \\
&= \left(\frac{\delta^\theta}{\delta}\right)^{\frac{2+q-p}{(1+q)(1-\theta)}}.
\end{align*}
On the other hand, if $p = 1$, then
\begin{align}\label{estlb}
N_\delta(S_{p, q} \cap B(0, \delta^\theta)) &\gtrsim_p \frac{\log(L_q) - \log(l_p)}{\delta} \nonumber\\
&\approx_{p, q} \delta^{- 1} |\log(\delta)| \nonumber\\
&\geq \left(\frac{\delta^\theta}{\delta}\right)^{\frac{1}{(1-\theta)}}.
\end{align}
Finally, if $p > 1$, then
\begin{align*}
N_\delta(S_{p, q} \cap B(0, \delta^\theta)) &\gtrsim_p \frac{l_p ^{1-p} - L_q^{1-p}}{\delta} \\
&\approx_{p,q} \delta^{\frac{(p-1)\theta}{p} - 1} \\
&= \left(\frac{\delta^\theta}{\delta}\right)^{\frac{p - \theta(p-1)}{p(1-\theta)}}.
\end{align*}
In each case we obtain the desired lower bound.

For the upper bound, we consider a cover of three parts. First, cover turns indexed by $k \geq L_q$ by covering the rectangle

$$
[-(2\pi L_q)^{-p}, (2\pi L_q)^{-p}] \times[-(2\pi L_q)^{-q}, (2\pi L_q)^{-q}] 
$$
by 
$$
\approx_{p,q} \frac{L_q^{-p}L_q^{-q}}{\delta^2} 
$$
squares of sidelength $\delta$. The remaining two portions may then be covered as in (\ref{case2lb}) and (\ref{case3lb}). Hence 
\begin{align*}
N_\delta(S_{p, q} \cap B(0, \delta^\theta)) &\lesssim_{p,q} \frac{L_q^{-p}L_q^{-q}}{\delta^2} + \sum\limits_{k=l_p}^{L_q} \frac{k^{-p}}{\delta} + \sum\limits_{k = l_q}^{l_p} \delta^{\theta - 1}.
\end{align*}
We now apply Lemma \ref{circumlem} in each case. If $p < 1$, then
\begin{align*}
N_\delta(S_{p, q} \cap B(0, \delta^\theta)) &\lesssim_{p,q} \delta^{\frac{p}{1+q} + \frac{q}{1+q} -2} + \delta^{-1}(L_q^{1-p} - l_p^{1-p}) + \delta^{\theta -1}(l_p - l_q)\\
&\lesssim_{p,q} \left(\frac{\delta^{\theta}}{\delta}\right)^{\frac{2+q-p}{(1-\theta)(1+q)}}.
\end{align*}
On the other hand, if $p = 1$, then
\begin{align*}
N_\delta(S_{p, q} \cap B(0, \delta^\theta)) &\lesssim_{p,q} \delta^{\frac{p}{1+q} + \frac{q}{1+q} -2} + \delta^{-1}(\log L_q - \log l_p) + \delta^{\theta -1}(l_p - l_q)\\
&\lesssim_{p,q} \left(\frac{\delta^{\theta}}{\delta}\right)^{\frac{1}{1-\theta}}.
\end{align*}
Finally, if $p > 1$, then
\begin{align*}
N_\delta(S_{p, q} \cap B(0, \delta^\theta)) &\lesssim_{p,q} \delta^{\frac{p}{1+q} + \frac{q}{1+q} -2} + \delta^{-1}(l_p^{1-p} - L_q^{1-p}) + \delta^{\theta -1}(l_p - l_q)\\
&\lesssim_{p,q} \left(\frac{\delta^{\theta}}{\delta}\right)^{\frac{p - (p-1)\theta}{(1-\theta)p}},
\end{align*}
which completes the proof.
\end{proof}

\section*{Acknowledgement}
SAB was supported by a \emph{Carnegie Trust PhD Scholarship} (PHD060287) and \emph{LMS Early-Career Fellowship} (ECF-1920-85). KJF and JMF were supported by an \emph{EPSRC Standard Grant} (EP/R015104/1). JMF was also supported by a \emph{Leverhulme Trust Research Project Grant} (RPG-2019-034). The authors would like to thank David Dritschel for helpful discussion on the physical applications of elliptical spirals. 

\bibliographystyle{amsplain}

\begin{thebibliography}{99}

\bibitem{bu:2020}
S.A. Burrell.
Dimensions of fractional Brownian images, {\em J. Theor. Probab.} (to appear),\\ available at: \url{https://arxiv.org/abs/2002.03659}

\bibitem{bufafr:2019}
S.A. Burrell, K.J. Falconer \and J.M. Fraser.
Projection theorems for intermediate dimensions, {\em J. Fractal Geom.}, {\bf  8}, (2021),  95--116.

\bibitem{dup}
Y. Dupain, M. Mend\`es France, C. Tricot.
Dimensions des spirales,
\emph{Bulletin de la S. M. F.}, {\bf  111}, (1983),  193--201.

\bibitem{falconer}
K.J. Falconer.
{\em Fractal Geometry: Mathematical Foundations and Applications},
 John Wiley \& Sons, Hoboken, NJ, 3rd. ed., 2014.
 
\bibitem{fal:2018}
K.J. Falconer.
\emph{A capacity approach to box and packing dimensions of projections and other images}, In Analysis, Probability and Mathematical Physics on Fractals, 1-19, {World Scientific Publishing, Singapore}, 2020.

\bibitem{fal:2019}
K.J. Falconer.
A capacity approach to box and packing dimensions of projections of sets and exceptional directions, {\em J. Fractal Geom.}, {\bf  8}, (2021),  1--26.

\bibitem{fafrke:2018}
{K.J. Falconer, J.M. Fraser, T. Kempton}, Intermediate dimensions, {\em Math. Zeit.}, {\bf  296}, (2020),  813--830.

\bibitem{faho:1997}
{K.J. Falconer \and J.D. Howroyd}, Projection theorems for box and packing dimensions, {\em Math. Proc. Cambridge Philos. Soc.}, {\bf 119}, (1997), 269--286.

\bibitem{fish}
A. Fish and L. Paunescu. 
Unwinding spirals 1, {\em Methods App. Anal.},
 {\bf 25}, (2018), 225--232.

\bibitem{foi}
C. Foias, D. D. Holmb, and E. S. Titi.
The Navier-Stokes-alpha model of fluid turbulence,
\emph{Physica D},  (2001), 505--519.

\bibitem{jon:book}
J.M. Fraser, {\em Assouad Dimension and Fractal Geometry},
 Cambridge University Press,
Tracts in Mathematics Series,  Series 222, 2021.

\bibitem{fraser:spirals}
{J.M. Fraser}, On H\"older solutions to the spiral winding problem, \emph{Nonlinearity}, {\bf 34}, (2021),   3251--3270. 

\bibitem{Spectraa}
J.M. Fraser and H. Yu.
New dimension spectra: finer information on scaling and homogeneity,
\emph{Adv. Math.}, {\bf 329}, (2018), 273--328.

\bibitem{kahane:book}
{J.P. Kahane}, Some Random Series of Functions, Cambridge University Press, Cambridge (1985).

\bibitem{unwindspirals}
Y. Katznelson, S. Nag and D. Sullivan.
On conformal welding homeomorphisms associated to Jordan curves, \emph{ Ann. Acad. Sci. Fenn. Math.}, {\bf 15}, (1990), 293--306.

\bibitem{istvan}
I. Kolossv\'ary.
On the intermediate dimensions of Bedford-McMullen carpets, {\em preprint}, (2020), available at: https://arxiv.org/abs/2006.14366

\bibitem{mand}
B.B. Mandelbrot.
{\em The Fractal Geometry of Nature},
Freeman, 1982.

\bibitem{moff}
H.K. Moffatt.
\emph{Spiral structures in turbulent flow},
Wavelets, fractals, and Fourier transforms, 317--324, \emph{Inst. Math. Appl. Conf. Ser. New Ser.}, {\bf 43}, Oxford Univ. Press, New York, 1993.

\bibitem{tan}
J. Tan,
On the intermediate dimensions of concentric spheres and related sets,
\emph{in preparation}, 2020.

\bibitem{vass}
J.C. Vassilicos.
\emph{Fractals in turbulence},
Wavelets, fractals, and Fourier transforms, 325--340, \emph{Inst. Math. Appl. Conf. Ser. New Ser.}, {\bf 43}, Oxford Univ. Press, New York, 1993.

\bibitem{vasshunt}
J.C. Vassilicos and  J. C. R. Hunt.
Fractal dimensions and spectra of interfaces with application to turbulence,
\emph{Proc. Roy. Soc. London Ser. A}, {\bf 435}, (1991),   505--534. 

\bibitem{xiao}
{Y. Xiao}, Packing dimension of the image of fractional Brownian motion, {\em Statist. Probab. Lett.}, {\bf 33}, (1997), 379--387.

\bibitem{zub}
D. \v{Z}ubrini\'{c} and V. \v{Z}upanovi\'c.
Box dimension of spiral trajectories of some vector fields in $\mathbb{R}^3$,
\emph{Qual. Theory Dyn. Syst.}, {\bf  6}, (2005),  251--272. 

\end{thebibliography}

\end{document}